\def\RSthmtxt{theorem~}\newref{thm}{name = \RSthmtxt}}
\def\RSlemtxt{lemma~}\newref{lem}{name = \RSlemtxt}}
\theoremstyle{plain}
\newtheorem{thm}{\protect\theoremname}
\theoremstyle{plain}
\newtheorem{lem}[thm]{\protect\lemmaname}
\theoremstyle{plain}
\newtheorem{cor}[thm]{\protect\corollaryname}
\theoremstyle{definition}
\newtheorem{problem}[thm]{\protect\problemname}
\theoremstyle{plain}
\newtheorem*{lem*}{\protect\lemmaname}
\theoremstyle{plain}
\newtheorem{prop}[thm]{\protect\propositionname}
\theoremstyle{plain}
\newtheorem*{fact*}{\protect\factname}
\theoremstyle{plain}
\newtheorem*{prop*}{\protect\propositionname}
\theoremstyle{remark}
\newtheorem{rem}[thm]{\protect\remarkname}
\theoremstyle{definition}
\newtheorem{defn}[thm]{\protect\definitionname}
\renewcommand{\ALG@name}{Code Snippet}
\providecommand{\propref}[1]{\ref{prop:#1}}
\renewcommand{\propref}[1]{\hyperref[prop:#1]{Proposition~\ref*{prop:#1}}}
  \renewcommand{\thmref}[1]{\hyperref[thm:#1]{Theorem~\ref*{thm:#1}}}%
  \renewcommand{\corref}[1]{\hyperref[cor:#1]{Corollary~\ref*{cor:#1}}}%
  \renewcommand{\secref}[1]{\hyperref[sec:#1]{Section~\ref*{sec:#1}}}%
  \renewcommand{\lemref}[1]{\hyperref[lem:#1]{Lemma~\ref*{lem:#1}}}%
  \renewcommand{\subsecref}[1]{\hyperref[subsec:#1]{Subsection~\ref*{subsec:#1}}}%
\lstdefinelanguage{Kotlin}{
  comment=[l]{//},
  commentstyle={\color{gray}\ttfamily},
  emph={filter, first, firstOrNull, forEach, lazy, map, mapNotNull, println},
  emphstyle={\color{OrangeRed}},
  identifierstyle=\color{black},
  keywords={!in, !is, abstract, actual, annotation, as, as?, break, by, catch, class, companion, const, constructor, continue, crossinline, data, delegate, do, dynamic, else, enum, expect, external, false, field, file, final, finally, for, fun, get, if, import, in, infix, init, inline, inner, interface, internal, is, lateinit, noinline, null, object, open, operator, out, override, package, param, private, property, protected, public, receiveris, reified, return, return@, sealed, set, setparam, super, suspend, tailrec, this, throw, true, try, typealias, typeof, val, var, vararg, when, where, while},
  keywordstyle={\color{NavyBlue}\bfseries},
  morecomment=[s]{/*}{*/},
  morestring=[b]",
  morestring=[s]{"""*}{*"""},
  ndkeywords={@Deprecated, @JvmField, @JvmName, @JvmOverloads, @JvmStatic, @JvmSynthetic, Array, Byte, Double, Float, Int, Integer, Iterable, Long, Runnable, Short, String, Any, Unit, Nothing},
  ndkeywordstyle={\color{BurntOrange}\bfseries},
  sensitive=true,
  stringstyle={\color{ForestGreen}\ttfamily},
}
\providecommand{\corollaryname}{Corollary}
\providecommand{\definitionname}{Definition}
\providecommand{\factname}{Fact}
\providecommand{\lemmaname}{Lemma}
\providecommand{\problemname}{Problem}
\providecommand{\propositionname}{Proposition}
\providecommand{\remarkname}{Remark}
\providecommand{\theoremname}{Theorem}
\begin{document}
\title{Finding a solution to the Erd\H{o}s-Ginzburg-Ziv theorem in $O(n\log\log\log n)$
time}
\author{Yui Hin Arvin Leung\thanks{Department of Pure Mathematics and Mathematical Statistics, University
of Cambridge. Email: yhal2@cam.ac.uk}}
\maketitle
\begin{abstract}
The Erd\H{o}s-Ginzburg-Ziv theorem states that for any sequence of
$2n-1$ integers, there exists a subsequence of $n$ elements whose
sum is divisible by $n$. In this article, we provide a simple, practical
$O(n\log\log n)$ algorithm and a theoretical $O(n\log\log\log n)$
algorithm, both of which improve upon the best previously known $O(n\log n)$
approach. This shows that a specific variant of boolean convolution
can be implemented in time faster than the usual $O(n\log n)$ expected
from FFT-based methods.
\end{abstract}

\section{Introduction}

We denote $[n]=\{1,2,\cdots,n\}$. We use $[a,b]$ to denote integers
in this range, inclusive on both ends. Let $A$ and $B$ be two sets.
The sumset $A+B$ is defined as $A+B=\{a+b:a\in A,b\in B\}$. Adding
a single element $x$ to $A$ will be denoted directly as $A\cup\{x\}$,
as we use the shorthand $A+x$ to mean $A+\{x\}=\{x+a|a\in A\}$.

We use $\mathbb{Z}_{n}$ to mean the ring $\mathbb{Z}/n\mathbb{Z}$.
We will use variables such as $a,b,i,j$ to denotes elements in $\mathbb{Z}_{n}$,
and variables such as $c,d,k,k_{i}$ to denote an integer. The product
between an integer and an element of $\mathbb{Z}_{n}$ is the natural
one. 

We denote by $AP(a,k)=\{0,a,2a,3a,\cdots,ka\}$ the arithmetic progression
starting at $0$ with difference $a$. We emphasize that the first
argument is an element $a\in\mathbb{Z}_{n}$, and the second argument
$k$ is an integer, though throughout the algorithim we have $k\in[0,n-1]$.
In this paper, all arithmetic progressions (APs) are assumed to start
at $0$. Note that $AP(a,k)$ contains $k+1$ elements; however, for
simplicity, we call $k$ the length of the AP (and not $k+1$), which
is convenient when discussing the total lengths in a sumset $\sum_{i}AP(a_{i},k_{i})$.
Thus the ``length'' is not equal to the number of elements.

Throughout the paper, it is implicit that $AP(a,k)$ lives in an ambient
group $\mathbb{Z}_{n}$. It is only necessary to discuss when $n$
is prime for our purpose. Thus $\mathbb{Z}_{n}$ is always a field. 

The Erd\H{o}s-Ginzburg-Ziv theorem states that
\begin{thm}
Let $m$ be a positive integer. Let $a_{1},\ldots,a_{2m-1}$ be elements
of the cyclic group $\mathbb{Z}_{m}$. Then there exists a subset
$I\subset\{1,2,\ldots,2m-1\}$, with $|I|=m$, such that $\sum_{i\in I}a_{i}=0$. 
\end{thm}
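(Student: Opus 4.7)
The plan is to follow the classical two-step strategy that separates the multiplicative structure of $m$ from the combinatorial core of the problem. First I would reduce to the case that $m = p$ is prime by showing that if the theorem holds for $m_{1}$ and for $m_{2}$, then it holds for $m_{1} m_{2}$; iterating this reduction along the prime factorisation of $m$ then leaves only the prime case to handle. Second I would prove the prime case via a Chevalley--Warning style polynomial argument exploiting Fermat's little theorem.

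For the multiplicative reduction I would take $2 m_{1} m_{2} - 1$ elements of $\mathbb{Z}_{m_{1} m_{2}}$ and repeatedly apply the theorem for $m_{1}$ to these elements reduced modulo $m_{1}$. Each application extracts a disjoint block of $m_{1}$ elements whose sum is divisible by $m_{1}$. Since after $k$ extractions there remain $2 m_{1} m_{2} - 1 - k m_{1}$ elements, and invoking the theorem requires at least $2 m_{1} - 1$ elements, this procedure can be repeated exactly $2 m_{2} - 1$ times. Dividing the corresponding block sums by $m_{1}$ yields $2 m_{2} - 1$ elements of $\mathbb{Z}_{m_{2}}$; applying the theorem for $m_{2}$ then picks out $m_{2}$ of those blocks whose sums together are divisible by $m_{2}$. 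The union of these chosen blocks consists of $m_{1} m_{2}$ elements whose total sum is divisible by both $m_{1}$ and $m_{2}$, hence by $m_{1} m_{2}$.

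For the prime case, given $a_{1}, \ldots, a_{2p-1} \in \mathbb{Z}_{p}$, I would introduce the pair of polynomials
\[
f(x_{1}, \ldots, x_{2p-1}) = \sum_{i=1}^{2p-1} x_{i}^{p-1}, \qquad g(x_{1}, \ldots, x_{2p-1}) = \sum_{i=1}^{2p-1} a_{i}\, x_{i}^{p-1}
\]
over $\mathbb{Z}_{p}$. The sum of their total degrees is $2(p-1)$, which is strictly less than the number of variables $2p - 1$, so the Chevalley--Warning theorem implies that the number of common zeros in $\mathbb{Z}_{p}^{2p-1}$ is divisible by $p$. The trivial zero $x = 0$ is one such solution, so there must exist a nontrivial common zero $x$. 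By Fermat's little theorem, $x_{i}^{p-1}$ equals $1$ when $x_{i} \neq 0$ and $0$ otherwise; hence the set $I = \{ i : x_{i} \neq 0\}$ satisfies $|I| \equiv 0 \pmod{p}$ and $\sum_{i \in I} a_{i} \equiv 0 \pmod{p}$. Since $0 < |I| \leq 2p - 1$, this forces $|I| = p$, as required.

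The main obstacle is the prime case. The multiplicative reduction is a clean combinatorial pigeonhole once both smaller instances are already available, whereas the prime case genuinely needs the field structure of $\mathbb{Z}_{p}$ and characteristic-$p$ cancellation, which is precisely what Chevalley--Warning packages. An alternative route would go through the Cauchy--Davenport inequality and Davenport-constant-style arguments, but that path is longer and still ultimately rests on $\mathbb{Z}_{p}$ being a field, so I would prefer the Chevalley--Warning route for its brevity.
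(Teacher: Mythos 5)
Your proof is correct, and both halves are classical, but your prime case takes a genuinely different route from the paper. The multiplicative reduction you give (extract $2m_2-1$ disjoint blocks of size $m_1$ with $m_1$-divisible sums, then apply EGZ for $m_2$ to the scaled block sums) is essentially the same inductive argument the paper uses in its Proposition~\ref{prop:red-general}, with the minor difference that the paper peels off a single prime factor $p$ from $n=pa$ rather than splitting into arbitrary coprime parts; the combinatorics are identical. For the prime case, however, the paper does not use Chevalley--Warning. It sorts the $2p-1$ values, disposes of the case where some value appears $p+1$ times, and otherwise forms the $p-1$ nonzero pairwise differences $b_i=a_{i+p}-a_i$; it then invokes \lemref{main-lemma} --- that the sumset $\{0,b_1\}+\cdots+\{0,b_{p-1}\}$ is all of $\mathbb{Z}_p$ --- which it proves by Cauchy--Davenport. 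Your Chevalley--Warning argument (introduce $f=\sum x_i^{p-1}$ and $g=\sum a_i x_i^{p-1}$, note the total degree $2(p-1)$ is below the $2p-1$ variables, extract a nontrivial common zero, and read off $|I|=p$) is a well-known alternative and is clean and correct as you state it. The trade-off is that Chevalley--Warning is a pure counting argument and gives no hint of how to \emph{find} the set $I$, whereas the Cauchy--Davenport/sumset formulation is the entire scaffolding for the paper's algorithm: \lemref{main-lemma} and the incremental sumset construction are precisely what the transformation and packing phases make effective. So your route proves the theorem more briefly, but the paper's route is chosen because it is the one that algorithmicizes.
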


The common proof of the Erd\H{o}s-Ginzburg-Ziv theorem involves reducing
to a prime $p$ and then proving a similar lemma for the prime $p$.
This is the critical lemma for the proof and algorithm.
\begin{lem}
\label{lem:main-lemma}Let $k\in\mathbb{Z}_{p}$, and let $a_{1},\ldots,a_{p-1}\in\mathbb{Z}_{p}$
be elements such that none of the $a_{i}$ is equal to $0$. Then
there exists a subset $I\subset\{1,2,\ldots,p-1\}$ such that $\sum_{i\in I}a_{i}=k$. 
\end{lem}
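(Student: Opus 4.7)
The plan is to interpret each subset sum $\sum_{i\in I} a_i$ as picking one element from each set $S_i := \{0, a_i\}$ and then summing. Because each $a_i$ is nonzero in $\mathbb{Z}_p$, we have $|S_i| = 2$ for every $i$. Thus the collection of achievable subset sums is exactly the iterated sumset
\[
  S_1 + S_2 + \cdots + S_{p-1} \subset \mathbb{Z}_p,
\]
and the lemma is equivalent to showing that this sumset equals all of $\mathbb{Z}_p$.

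The tool I would invoke is the Cauchy--Davenport theorem: for $A, B \subset \mathbb{Z}_p$ with $p$ prime, $|A+B| \geq \min(p, |A|+|B|-1)$. Iterating this on $r$ sets gives $|A_1 + \cdots + A_r| \geq \min(p,\, \sum_i |A_i| - (r-1))$. Applied to $r = p-1$ sets each of size $2$, I compute
\[
  |S_1 + \cdots + S_{p-1}| \;\geq\; \min\!\bigl(p,\; 2(p-1) - (p-2)\bigr) \;=\; \min(p, p) \;=\; p,
\]
so the sumset fills $\mathbb{Z}_p$, and in particular contains the target $k$. Choosing, for each $i$, whichever of $0$ or $a_i$ was used to realise $k$ produces the desired index set $I$.

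The main obstacle is really just having Cauchy--Davenport (or a hands-on equivalent) available; everything else is a one-line count. If one prefers to avoid quoting it, the same conclusion follows by a direct induction: letting $T_j = S_1 + \cdots + S_j$, either $T_j = \mathbb{Z}_p$ already, or there is some element of $\mathbb{Z}_p$ not in $T_j$, and one shows $|T_{j+1}| \geq |T_j| + 1$ by arguing that adding $\{0, a_{j+1}\}$ strictly enlarges $T_j$ (else $T_j$ would be closed under translation by $a_{j+1}$, hence a union of cosets of $\langle a_{j+1}\rangle = \mathbb{Z}_p$). Either route gives $|T_{p-1}| \geq p$ and hence the lemma.
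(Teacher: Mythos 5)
Your proof is correct and uses essentially the same approach as the paper: both reduce the lemma to showing that the iterated sumset $\{0,a_1\}+\cdots+\{0,a_{p-1}\}$ fills $\mathbb{Z}_p$ via Cauchy--Davenport. The direct-induction alternative you sketch at the end is a standard way to prove this instance of Cauchy--Davenport from scratch but does not change the route.
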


Obtaining a solution to this lemma is typically the slowest step,
as Erd\H{o}s-Ginzburg-Ziv can be reduced to this lemma while keeping
the same time complexity. The complete reduction will be provided
in \secref{complete-reduction}.

This is the most important step. Throughout most of the paper (except
\secref{complete-reduction}) we will assume that $n$ is prime (i.e.
$n=p$).

The proof of this lemma follows relatively easily from the famous
Cauchy-Davenport theorem. Although alternative proofs exist, the approach
using Cauchy-Davenport gives the simplest proof. 
\begin{thm}
(Cauchy-Davenport) Let $p$ be a prime and $A,B\subset\mathbb{Z}_{p}$,
then $|A+B|\geq\min(p,|A|+|B|-1)$. 
\end{thm}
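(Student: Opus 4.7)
The plan is to split into two cases based on the size of $|A|+|B|$.

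In the saturated case $|A|+|B|>p$, I would argue directly: for any $c\in\mathbb{Z}_{p}$, the two sets $A$ and $c-B$ have cardinalities summing to more than $p$, so by pigeonhole they intersect, whence $c\in A+B$. Thus $A+B=\mathbb{Z}_{p}$ and the bound $|A+B|\geq p$ holds with equality.

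In the principal case $|A|+|B|\leq p$, I would induct on $\min(|A|,|B|)$, assuming WLOG $|B|\leq|A|$. The base case $|B|=1$ is immediate since $|A+B|=|A|=|A|+|B|-1$. For the inductive step, with $|B|\geq 2$, the main tool is Dyson's $e$-transform: for $e\in\mathbb{Z}_{p}$ set
\[
A_{e}=A\cup(B+e),\qquad B_{e}=B\cap(A-e).
\]
A one-line inclusion-exclusion gives $|A_{e}|+|B_{e}|=|A|+|B|$, and a short case check on whether an element of $A_{e}$ lies in $A$ or in $B+e$ shows $A_{e}+B_{e}\subseteq A+B$. If I can choose $e$ with $0<|B_{e}|<|B|$, then $(A_{e},B_{e})$ satisfies the same hypotheses but has strictly smaller minimum side, so the inductive hypothesis delivers
\[
|A+B|\geq|A_{e}+B_{e}|\geq|A_{e}|+|B_{e}|-1=|A|+|B|-1.
\]

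The main obstacle is producing such an $e$. I would fix some $b_{0}\in B$ and restrict to the $|A|$ choices $e\in A-b_{0}$, each of which automatically gives $|B_{e}|\geq 1$ since $b_{0}\in B\cap(A-e)$. Suppose for contradiction that every such $e$ also satisfied $B+e\subseteq A$; running $e=a-b_{0}$ over all $a\in A$ would yield $A+(B-b_{0})\subseteq A$, so $A$ would be invariant under translation by each element of $B-b_{0}$. Because $|B|\geq 2$, at least one such element is nonzero, and because $p$ is prime this invariance forces $A=\mathbb{Z}_{p}$, contradicting $|A|+|B|\leq p$ with $|B|\geq 2$. Hence a valid $e$ exists, the induction goes through, and the theorem follows.
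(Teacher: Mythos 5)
The paper does not actually prove the Cauchy--Davenport theorem; it is quoted as a well-known result and used as a black box in deriving the main lemma, so there is no in-paper proof to compare against. That said, your argument is the classical Dyson $e$-transform proof, and this is precisely the method the paper alludes to elsewhere: in the discussion surrounding the proposition on general sets $B$, the replacement $A\leftarrow A\cup(B+e)$, $B\leftarrow B\cap(A-e)$ is explicitly described as coming from ``a standard proof of the Cauchy-Davenport theorem.'' Your write-up is correct and complete: the pigeonhole argument for the saturated case $|A|+|B|>p$ is fine; the identity $|A_e|+|B_e|=|A|+|B|$ follows from the bijection $A\cap(B+e)\to B\cap(A-e)$ given by $a\mapsto a-e$; the containment $A_e+B_e\subseteq A+B$ is verified by the two-case check you indicate; and the translation-invariance argument produces a valid $e$ precisely because a nonzero element of $\mathbb{Z}_p$ generates the whole group. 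One small point worth flagging: the theorem as stated is false when $A=\emptyset$ and $|B|\geq 2$, so it implicitly assumes $A$ and $B$ are nonempty; your choices of $b_0\in B$ and $e\in A-b_0$ already rely on this, so you should record nonemptiness as an explicit hypothesis. With that caveat, the proof is sound.
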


\begin{proof}
(to \lemref{main-lemma}). It suffices to note that the sumset $\{0,a_{1}\}+\{0,a_{2}\}+\cdots+\{0,a_{p-1}\}$
contains the entirety of $\mathbb{Z}_{p}$ as its size can be easily
estimated by the Cauchy-Davenport theorem to be $p$. 
\end{proof}
Although the correctness is known, the Cauchy-Davenport theorem does
not directly yield an efficient algorithm. A straightforward method
to find a solution to lemma 2 is to use knapsack dynamic programming,
which runs in $O(n^{2})$ time. More generally, finding the sumset
$\{0,a_{1}\}+\{0,a_{2}\}+\cdots+\{0,a_{k}\}$ is the modular subset
sum problem, and has a randomized $O(n\log n)$ solution (by \cite{randomresult}).
Choi et al. (2022) provided a far simpler $O(n\log n)$ algorithm
for the case of Erd\H{o}s-Ginzburg-Ziv theorem \cite{main}.

Our main contribution in this article is the following: 
\begin{thm}
There is an $O(n\log\log\log n)$ algorithm to find a solution to
\lemref{main-lemma} 
\end{thm}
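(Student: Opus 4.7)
The plan is to reduce the lemma to efficient sumset computation over arithmetic progressions and then exploit AP-specific structure in place of generic FFT-based convolution.

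First, I would group the inputs $a_1, \ldots, a_{p-1}$ by value, obtaining at most $p-1$ distinct nonzero values $v_1, \ldots, v_m$ with multiplicities $k_1, \ldots, k_m$ where $\sum_j k_j = p-1$. The achievable subset sums then form the sumset $\sum_j AP(v_j, k_j) \subseteq \mathbb{Z}_p$, which by Cauchy-Davenport equals all of $\mathbb{Z}_p$. The task is therefore to compute (enough of) this sumset and to trace back coefficients $0 \le c_j \le k_j$ satisfying $\sum_j c_j v_j = k$ so as to reconstruct an explicit $I$.

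The central device is a merging subroutine that stores an intermediate sumset $S$ as a compressed union of short APs and augments $S$ by $AP(a, l)$ in time proportional to the change in representation size rather than to $|S|$: APs of $S$ already aligned with direction $a$ are lengthened in place, and the sliding-window union $S \cup (S + a) \cup \cdots \cup (S + la)$ is computed in one pass. Organizing the $m$ input APs in a balanced binary tournament of such merges gives a first $O(n \log m)$ bound. To improve this to $O(n \log \log n)$, I would exploit the fact that Cauchy-Davenport makes the sumset saturate quickly: after absorbing roughly $\log n$ APs the running set is already dense, so subsequent APs can be handled in batches whose per-batch cost is $O(n)$, reducing the effective tournament depth to $O(\log \log n)$.

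To push from the simple $O(n \log \log n)$ algorithm down to the theoretical $O(n \log \log \log n)$ bound, I would feed the algorithm into itself: the subproblem solved at each batch is again an instance of sumset computation over APs, with parameters small enough that invoking the $O(n \log \log n)$ algorithm as a black box shaves one more logarithm, in the spirit of iterated-logarithm constructions. The main obstacle will be designing the compressed AP representation so that it simultaneously (i) admits the output-sensitive merging needed to keep per-batch cost linear, (ii) supports the final traceback producing an actual subset $I \subseteq \{1, \ldots, p-1\}$ rather than just a coefficient vector, and (iii) behaves cleanly under one further level of recursion, so that the $\log \log$ overhead does not collapse back into $\log \log n$ instead of $\log \log \log n$.
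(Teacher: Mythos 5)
Your high-level intuition is right that Cauchy--Davenport guaranteed saturation plus the AP structure of the summands is what must be exploited, and your instinct that one needs a subroutine that absorbs an entire $AP(a,l)$ into the running set in time roughly linear in $l$ is exactly the paper's key packing primitive. But the concrete mechanism you sketch does not close the gap, and in several places it goes in a direction that will not work.

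The central difficulty is your ``compressed union of short APs'' representation of the running sumset $S$. After adding even two APs with incommensurate common differences, $S$ is a rank-$2$ generalized AP, and after $m$ of them $S$ has no usable compressed description; the ``sliding-window union in one pass'' step you invoke has no clear meaning once $S$ is not itself a union of a few APs aligned with $a$. The paper avoids this entirely: $S$ is maintained as a \emph{plain} set (boolean array plus pointers) supporting $O(1)$ membership, and the primitive is \emph{not} ``compute $S + AP(a,l)$'' but ``find $l$ fresh elements $T$ with $S \cup T \subseteq S + AP(a,l)$ in $O(\log n + l)$ time'' via a recursive gap-filling binary search. With $m$ APs this gives a packing cost of $O(m\log n + n)$, so the real work is to drive $m$ below $n/\log n$ \emph{before} packing begins.

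That preliminary reduction --- the transformation phase --- is the part your proposal is missing. You cannot get it from ``a balanced tournament of merges'' over the input APs or by ``noting the set is dense after $\log n$ additions'': density of $S$ does not reduce the $O(\log n)$ per-AP overhead, and a tournament tree over $m$ summands has depth $\log m$, not $\log\log n$. The paper instead operates on the \emph{input} multiset: it detects collisions $ci = dj$ between small multiples of distinct differences and resolves them with two replacement operations (a length-preserving exchange, and a Frobenius-style AP-merge), which provably drop the number of distinct differences while not decreasing total length. A ``trim'' pass up to phase $\log n$ already gives the $O(n\log\log n)$ algorithm. The $O(n\log\log\log n)$ bound does \emph{not} come from feeding the algorithm into itself as an iterated-log bootstrapping; it comes from a ``growth step'' that, in $O(n)$ time, takes a state with $n/k$ APs to one with at most $n/k^{4/3}$ APs, so that $O(\log\log\log n)$ iterations suffice to reach $n/\log n$. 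Your recursion idea, as stated, has no base to recurse on (there is no smaller instance of the same problem being solved), and the invariants needed to make operations reversible for the final traceback are also not addressed. So while the outline has the right flavour, the two load-bearing lemmas --- the $O(\log n + l)$ AP-packing subroutine against an uncompressed $S$, and the collision-resolution transformation that shrinks the number of distinct APs with a $k \mapsto k^{4/3}$ progress measure --- are both absent and would need to be supplied.
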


\begin{cor}
There is an $O(n\log\log\log n)$ algorithm to find a solution to
the Erd\H{o}s-Ginzburg-Ziv theorem. 
\end{cor}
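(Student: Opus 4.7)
The corollary follows by feeding the preceding theorem into the classical multiplicative reduction from the Erd\H{o}s-Ginzburg-Ziv theorem to its prime case. The reduction has two tiers, deferred in full to \secref{complete-reduction}; what I need to outline here is why invoking it does not degrade the time bound.

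First tier (composite to prime): if $n = p \cdot m$ with $p$ prime, then given $2n - 1$ elements of $\mathbb{Z}_n$, I repeatedly choose any $2p - 1$ of them, reduce modulo $p$, and apply the prime-$p$ algorithm to extract a $p$-subset whose sum is divisible by $p$. Each extraction removes $p$ elements from the pool while requiring $2p - 1$ simultaneously available, so from the initial pool of $2pm - 1$ one performs exactly $2m - 1$ extractions. Dividing each extracted sum by $p$ yields $2m - 1$ elements of $\mathbb{Z}_m$, on which the algorithm recurses. The subset of $m$ aggregates returned by the recursion concatenates to $n$ original elements whose total sum is divisible by $pm = n$.

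Second tier (prime to the setting of \lemref{main-lemma}): given $2p - 1$ elements of $\mathbb{Z}_p$, I bucket-sort them in linear time as $b_1 \le \cdots \le b_{2p-1}$ and set $a_i = b_{i+p} - b_i$ for $i = 1, \dots, p - 1$. If some $a_i = 0$, the block $b_i, b_{i+1}, \dots, b_{i+p}$ already contains $p$ equal elements summing to $0 \bmod p$. Otherwise every $a_i$ is nonzero, and \lemref{main-lemma} supplies a subset $I \subseteq \{1, \dots, p-1\}$ with $\sum_{i \in I} a_i$ equal to the prescribed target $-\sum_{j=1}^{p} b_j$; selecting $b_{i+p}$ for $i \in I$ and $b_i$ otherwise, together with $b_p$, then gives the desired zero-sum $p$-subset.

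For the running time, write $n = p_1 p_2 \cdots p_k$ and peel off one prime per level. The work at level $i$ is dominated by the $2 (n / (p_1 \cdots p_{i+1})) - 1$ calls to the prime algorithm for modulus $p_{i+1}$, each costing $O(p_{i+1} \log\log\log p_{i+1})$, for a level total of $O((n / (p_1 \cdots p_i)) \log\log\log p_{i+1})$. Summing over $i$, the geometric decay of the factors $1/(p_1 \cdots p_i)$ bounds the overall cost by $O(n \log\log\log n)$. The main thing to watch is purely bookkeeping: the pool of unused elements, the per-group aggregates, and the sorting step must all be maintained with linear-time primitives so that no stray $\log n$ factor creeps in to dominate the sub-logarithmic overhead coming from the prime algorithm.
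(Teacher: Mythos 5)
Your proposal is correct and follows essentially the same two-tier reduction as the paper (paper's \propref{red-prime} is your prime-to-lemma tier, and \propref{red-general} is your composite-to-prime tier), with the same linear-time sorting observation and the same geometric-decay argument for summing the per-level costs. The only cosmetic difference is that you unfold the paper's recurrence $T(n) = (2a-1)T(p) + T(a) + O(n)$ explicitly into a sum over the prime factorization $n = p_1 \cdots p_k$, which is an equivalent way to organize the same bound.
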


We also present a practical $O(n\log\log n)$ algorithm that is both
conceptually simpler and practically efficient. The $O(n\log\log n)$
algorithm can be considered an important subroutine in the overall
algorithm. While the $O(n\log\log\log n)$ solution is asymptotically
better, for practical values of $n$, the algorithm would have terminated
before reaching the phase that is unique to the $O(n\log\log\log n)$
algorithm. Both algorithms share a considerable number of ideas.

\subsection{Variant of Boolean Convolution}

A key optimisation -- which saves an extra logarithmic factor in
both algorithms -- arises from a novel method for performing a specific
variant of Boolean convolution. While finding $A+B$ generally requires
$O(n\log n)$ time using the Fast Fourier Transform or other methods,
our application does not require $A+B$ to be computed completely.
Instead, we only require that a sufficiently large subset of $A+B$
is known, and this required size is exactly the one guaranteed by
Cauchy-Davenport. We can state our problem as follows: 
\begin{problem}
Given sets $A,B\subset\mathbb{Z}_{p}$ where $|A|+|B|-1\leq p$, determine
a set $S$ such that $S\subset A+B$ and $|S|\geq|A|+|B|-1$. 
\end{problem}

This general statement may admit solutions with different trade-offs.
In our algorithm, we would like to keep $A$ largely as the same set,
so we actually want the set $S$ to be of the form $S=A\cup T$, where
$A\cap T=\emptyset$, with $|T|=|B|$. Additionally, we would like
the algorithm to have a runtime that depends only on $|B|$ and not
on $|A|$, and it would be best if it is linear in $|B|$. In some
variants, it is useful to write $S=(c+A)\cup T$, that is, we allow
a constant shift on the base set $A$ before adding to it.

Since our algorithms are independent of $|A|$, we shall formalize
the ways we may interact with the set $A$. We may, in $O(1)$ time,
perform any of the following three operations: (1) given $x$, query
whether $x\in A$, (2) find any element in $A$, (3) find any element
not in $A$. Note that we do not take advantage of any pre-calculations
of $A$ that may use additive or multiplicative properties.

The idea from \cite{main} can be restated in this framework as follows: 
\begin{lem*}
\label{lem:single-element-packing}If $|B|=2$, then we can find $S=A\cup\{c\}$
in time $O(\log n)$. 
\end{lem*}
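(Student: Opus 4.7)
The plan is to reduce the task to finding one element $a \in A$ with $a + d \notin A$, where $d := b_2 - b_1 \neq 0$ is the difference of the two elements $B = \{b_1, b_2\}$. Once such $a$ is in hand, I would set $c := a + b_2$ and output $S := (A + b_1) \cup \{c\}$; then $S \subset A + B$, and since $c - b_1 = a + d \notin A$ one has $c \notin A + b_1$, so $|S| = |A| + 1 = |A| + |B| - 1$ as required. The constant shift of $A$ by $b_1$ is of the form discussed just above the statement.

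Such an $a$ must exist. If $a + d \in A$ for every $a \in A$, then $A + d \subseteq A$; since translation by $d$ is a bijection of $\mathbb{Z}_p$ this forces $A + d = A$, and iterating, using that $d \neq 0$ generates the additive group of the prime field $\mathbb{Z}_p$, gives $A = \mathbb{Z}_p$, contradicting $|A| \leq p - 1$ (which follows from $|A| + |B| - 1 \leq p$).

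To find $a$ fast, I would first call operation (2) for some $a_0 \in A$ and operation (3) for some $z \in \mathbb{Z}_p \setminus A$, each in $O(1)$ time. Let $k \in \{1,\ldots,p-1\}$ be the unique integer with $z = a_0 + kd$ in $\mathbb{Z}_p$; it is computed in $O(1)$ as $k = (z-a_0)d^{-1}$. Along the arithmetic progression $y_i := a_0 + i d$ for $i=0,\ldots,k$ the endpoints satisfy $y_0 \in A$ and $y_k \notin A$, so there exists $j$ with $y_j \in A$ and $y_{j+1} \notin A$. I would locate such a $j$ by standard binary search: maintain $[l,r] \subseteq [0,k]$ with $y_l \in A$ and $y_r \notin A$, query the midpoint via operation (1), and shrink. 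After $O(\log k) = O(\log p) = O(\log n)$ queries the interval has length one and $a := y_l$ works; set $c = a + b_2$.

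The subtlety I would flag is why one cannot simply double outward from $a_0$: probing $a_0 + d, a_0 + 2d, a_0 + 4d, \ldots$ may fail to hit a non-element of $A$ within $O(\log p)$ queries, since $A$ could contain a very long arithmetic-progression segment in direction $d$ starting at $a_0$. The essential role of operation (3) is to supply a concrete witness $z \notin A$ that anchors the far end of the search interval, converting the problem into a bona fide binary search of length $O(\log n)$.
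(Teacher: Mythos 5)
Your proposal is correct and is essentially the paper's approach: the paper likewise performs a binary search along the direction-$d$ arithmetic progression, anchored at a known member of $A$ and a known non-member, precisely the ``modified binary search'' spelled out in \propref{part-two-usual-binary-search}. The only superficial difference is that you handle general $B=\{b_1,b_2\}$ by outputting $(A+b_1)\cup\{c\}$, whereas the paper normalises to $B=\{0,b\}$ before searching.
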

Its solution can be briefly described as a modified binary search.
The $O(n\log n)$ algorithm for \lemref{main-lemma} in \cite{main}
then follows by starting from $\{0\}$ and adding each element of
the input set $\{a_{i}\}$ one by one using\lemref{single-element-packing}

Our contribution is the following 
\begin{lem*}
\label{lem:AP-packing}If $B$ is an arithmetic progression, then
we can find $S=A\cup T$ in time $O(\log n+|B|)$. 
\end{lem*}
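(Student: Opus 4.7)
The plan is to first perform a field-theoretic reduction to normalise the AP. Writing $B = AP(a, k)$ with $|B| = k + 1$, since $p$ is prime and $a \neq 0$, the map $x \mapsto a^{-1} x$ is a bijection on $\mathbb{Z}_p$. Setting $A' = a^{-1} A$ converts the problem into finding $S'$ with $S' \subset A' + \{0, 1, \ldots, k\}$, from which $S = a \cdot S'$ recovers the original. Each of the three oracle primitives on $A$ carries over to $A'$ at $O(1)$ overhead (for instance, $x \in A'$ iff $a x \in A$), so throughout I may assume $B = \{0, 1, \ldots, k\}$.

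Next, I would invoke the single-element packing lemma (the $|B| = 2$ case) to find, in $O(\log n)$ time, a \emph{right-edge} $c_0$ satisfying $c_0 \in A$ and $c_0 + 1 \notin A$; this is the source of the $\log n$ term in the runtime. Starting from $c_0$, I perform a forward scan, maintaining a pointer $c$ and the most recently observed $A$-element $\ell$ (initialised to $c_0$). At each step I query $c \in A$: on success I set $\ell \leftarrow c$, otherwise I append $c$ to $T$. The scan halts once $|T| = k$. A short induction yields the invariant $c - \ell \leq |T|$, because every non-$A$ position in $(\ell, c]$ has already been added to $T$. Consequently, when I append a fresh $c \notin A$, the updated $c - \ell$ is at most $k$, which certifies $c = \ell + (c-\ell) \in A + \{1, \ldots, k\} \subset A + B$ and $c \notin A$, i.e.\ $c \in T$.

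The main obstacle is bounding the scan's total length, because a long contiguous $A$-run could, if left alone, waste many probes without contributing to $T$. I would tackle this by interleaving the linear scan with an exponential-doubling skip: on entering an $A$-element at $c$, probe $c + 1, c + 2, c + 4, \ldots$ until some $c + 2^j \notin A$, then binary-search within $[c + 2^{j-1} + 1, c + 2^j]$ to locate the next gap, costing $O(\log m)$ on an $A$-run of length $m$. The hardest step will be the amortization: proving that the aggregate cost of all skips, combined with the $O(|B|)$ linear work and the initial $O(\log n)$ packing, stays within $O(\log n + |B|)$. I would prove this via a charging argument that pairs each skip with the fresh gap it exposes -- each such gap contributes at least one element to $T$, bounding the number of skips by $|B|$ -- and uses the constraint $|A| + |B| - 1 \leq p$ to control the worst-case distribution of run lengths so that the sum $\sum \log m_i$ collapses to within the advertised budget.
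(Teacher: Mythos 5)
Your reduction to $B = \{0, 1, \ldots, k\}$ by scaling, and the use of the single-element packing lemma to find an initial ``right-edge'' $c_0 \in A$ with $c_0 + 1 \notin A$ in $O(\log n)$ time, both match the paper. Your invariant $c - \ell \leq |T|$ for the forward scan is also essentially right and would give a correct $T$. The gap is in the running-time amortization, and it is not a detail you can patch with the charging argument you sketch.

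Concretely: the scan may traverse up to $k$ distinct $A$-runs of lengths $m_1, \ldots, m_r$ (with $r \leq k$) before it accumulates $k$ non-$A$ elements, because the worst case interleaves $k$ singleton gaps with $k$ runs. Each exponential-doubling skip plus binary search on a run of length $m_i$ genuinely costs $\Theta(\log m_i)$ probes, so your total cost is $\Theta(\log n + k + \sum_i \log m_i)$. The constraint $|A| + |B| - 1 \leq p$ only gives $\sum_i m_i \leq n - k$, and by concavity of $\log$ the quantity $\sum_i \log m_i$ is maximized near $r \log(n/r)$; with $r = k$ this is $\Theta(k \log(n/k))$. For $k \approx \sqrt{n}$ that is $\Theta(\sqrt{n}\log n)$, a full $\log$ factor worse than the claimed $O(\log n + k)$, and plugging this weaker bound into the packing phase (with $n/\log n$ APs of length $\approx \log n$) would blow up to $\Theta(n\log n)$ rather than $O(n)$. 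There is no distribution-of-run-lengths argument that collapses $\sum \log m_i$ to $O(\log n + k)$; the inequality simply goes the wrong way. (A smaller point: doubling from inside a run can overshoot past interior gaps, so the binary search finds \emph{a} boundary rather than \emph{the next} one -- harmless for correctness but it means ``locate the next gap'' is not quite what the procedure does.)

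The paper sidesteps this by replacing the forward scan entirely with a recursive procedure $\texttt{fillgap}(x,y)$, called with $x \in S$, $y \notin S$, that binary-splits the directed interval $[x,y]$, recurses into the halves whose endpoints violate the membership pattern, and finally inserts $y$. The crucial structural feature your scan lacks: the recursion depth is globally bounded by $\log n$, and \emph{every} call that returns before the quota is met inserts exactly one element into $S$. Therefore at any moment (number of calls made) $=$ (returned calls) $+$ (calls on the stack) $\leq k + \log n$, giving $O(\log n + k)$ with no per-run $\log$ factor to amortize away. Your approach pays a separate $\log$ for each run; the paper's pays a single $\log n$ for the entire recursion stack and $O(1)$ per inserted element.
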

The solution is substantially modified from its binary search root
and is essentially new.

Equipped with \lemref{AP-packing}, we can now describe how to solve
\lemref{main-lemma}. First, some transformations are performed on
the input set $\{a_{i}\}$, resulting in another input set that has
at most $\frac{n}{\log n}$ distinct values. This step costs $O(n\log\log\log n)$.
This is now equivalent to recovering a sumset consisting of only $\frac{n}{\log n}$
different APs. We can now build from $\{0\}$ using \lemref{AP-packing}
, leading to $O(n)$ time in this step.

It would be interesting to see if the above technique can be extended
to general sets $B$. Unfortunately, we did not find any approach
that performs better than a direct adaptation of the proof of Cauchy-Davenport.
In particular, we could not make it depend linearly on $|B|$. We
leave the adaptation as the following straightforward proposition. 
\begin{prop}
For general $B$, we can find $S=(c+A)\cup T$ in time $O(|B|^{2}+|B|\log n)$. 
\end{prop}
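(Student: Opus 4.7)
The plan is to adapt the inductive proof of Cauchy--Davenport into an algorithm that constructs $T$ one element at a time. Pick any $b_0 \in B$ and set $c = b_0$, so that $c + A \subset A + B$ automatically; it remains to find $T \subset A + B$ with $|T| = |B| - 1$ and $T \cap (c + A) = \emptyset$. Maintain $T$ as a hash set (initially empty) supporting $O(1)$ membership, and a collection $D$ of ``active'' differences, initially $D = (B - b_0) \setminus \{0\}$.

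The main loop invokes the following subroutine until $|T| = |B| - 1$: choose some $d \in D$ and apply the single-element packing binary search on $(A, d)$ in $O(\log n)$ time to obtain $a \in A$ with $a + d \notin A$, producing the candidate $x = c + a + d = a + (b_0 + d) \in A + B$, which by construction satisfies $x \notin c + A$. A hash lookup decides whether $x \in T$. If not, add $x$ to $T$ and return. If so, enumerate in $O(|T|) = O(|B|)$ time the obstructions $\{t \in T : t - c - d \in A\}$ and use them to guide a modified binary search along the $d$-orbit for an alternative exit avoiding the obstructions; either such an exit is found and added to $T$, or $d$ is certified exhausted and removed from $D$. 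Cauchy--Davenport applied to $A$ and $B$ guarantees $|(A + B) \setminus (c + A)| \geq |B| - 1$, so whenever $|T| < |B| - 1$ some $d \in D$ still admits a productive choice, and the loop therefore terminates with $|T| = |B| - 1$.

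For the time analysis, each subroutine call either adds one element to $T$ or removes one difference from $D$; both events occur at most $|B| - 1$ times, capping the subroutine at $O(|B|)$ calls. Each call performs $O(\log n)$ binary-search work and $O(|B|)$ linear-scan work, yielding $O(|B|^2 + |B| \log n)$ total. The main obstacle is executing the ``$x \in T$'' branch within this budget: the secondary binary search must locate an alternative exit of the $d$-orbit (or prove none exists) without paying an extra $O(\log n)$ factor per obstruction encountered. The resolution is to enumerate the at most $|T| \leq |B|$ obstructions for $d$ up front and then use them to drive a single amortised binary search, charging the obstruction-enumeration cost against the subsequent add-to-$T$ or remove-from-$D$ event that ends the call.
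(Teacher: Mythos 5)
Your approach differs genuinely from the paper's: you fix $A$ and grow a separate set $T$ by looking for exits of $A$ in each difference direction $d = b - b_0$, whereas the paper iterates the Davenport $e$-transform, replacing $A \leftarrow A \cup (B+e)$ and $B \leftarrow B \cap (A-e)$ until $B$ is a singleton. The paper's choice is not cosmetic: because it updates $A$ itself, every binary search after that is performed on the enlarged $A$, so newly placed elements are automatically excluded as exit candidates and no collision-avoidance is ever needed. Your version keeps $A$ fixed, so previously found elements live in $T$ and the binary search can keep rediscovering them.

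That is where the gap lies. The step ``use the obstructions to drive a single amortised binary search along the $d$-orbit, and either find an unblocked exit or certify $d$ exhausted'' is not achievable in $O(|B| + \log n)$ time with only membership queries to $A$. Between two consecutive obstructions $o_i$ and $o_{i+1}$ on the $d$-line, the arc begins with $o_i + d \notin A$ and ends with $o_{i+1} \in A$; an unblocked exit exists in that arc iff the arc is not of monotone form $A^c \cdots A^c A \cdots A$. But binary search with a membership oracle can only locate a transition between a known point of $A$ and a known later point of $A^c$, and in such an arc you have neither: the known $A$-point is at the very end and the known $A^c$-point is at the very start. There is no $O(\log n)$ test that an arc of this shape has no internal $A \to A^c$ transition, so you cannot certify $d$ exhausted without scanning, and you cannot safely remove $d$ from $D$ on a failed attempt (a prematurely removed $d$ might have been the only remaining source of fresh elements, breaking your Cauchy--Davenport termination argument). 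To repair this you would essentially need to fold the found elements back into the searchable set, which is exactly what the $e$-transform does; I'd suggest reworking the proof around that update.
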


\begin{proof}
Recall that in a standard proof of the Cauchy-Davenport theorem, we
consider the replacement $A\leftarrow A\cup(B+e)$ and $B\leftarrow B\cap(A-e)$.
We can use the same process as in \propref{part-two-usual-binary-search}
to find a good choice of $e$ for which the new $B$ is strictly smaller
and nonempty. This part costs $O(\log n)$ per element. The updates
on $A$ and $B$ can each be done in $|B|$ time directly (by only
iterating over $B$, not $A$). Since this step is needed at most
$|B|$ times, the task can be completed in $O(|B|^{2}+|B|\log n)$. 
\end{proof}
While Boolean convolution problems likely cannot be solved faster
than $n\log n$, for our specific application to the Erd\H{o}s-Ginzburg-Ziv
theorem, it is easier in the following two ways: Let $T=\{0,a_{1}\}+\{0,a_{2}\}+\cdots+\{0,a_{n-1}\}$,
which is the sumset we wish to compute and find solutions for. 
\begin{itemize}
\item The result of the sumset is already known to be the entirety of $\mathbb{Z}_{n}$. 
\item We will consider various operations that involve the summands in $T$.
In fact, a typical proof of the Cauchy-Davenport theorem is to consider
the replacement $A\leftarrow A\cup(B+e)$ and $B\leftarrow B\cap(A-e)$.
This operation sometimes makes the new sumset a proper subset of $A+B$,
which would be undesirable if we intend to compute $A+B$ exactly.
In our case, as long as we can guarantee that the new $T$ still remains
$\mathbb{Z}_{n}$---mostly by keeping the number of elements and
using Cauchy-Davenport---we are able to change the summands with
more freedom. 
\end{itemize}

\section{Overview of Algorithm \label{sec:overview}}

Let $T=\sum_{i\in I}AP(a_{i},c_{i})$ be a general sumset of APs.
We specifically refer to a summation of the form $\sum_{i\in I}AP(a_{i},c_{i})$
as a sumset. By a construction of $k$ for a sumset, we mean an assignment
$\{b_{i}\}$ with $b_{i}\in AP(a_{i},c_{i})$ such that $\sum_{i\in I}b_{i}=k$.
We say that a set inclusion relation of two sumsets 
\[
\sum_{i\in I}AP(a_{i},c_{i})\subset\sum_{i\in I'}AP(a_{i}',c_{i}')
\]
holds constructively if, for every $k\in\mathbb{Z}_{n}$ and any construction
$\{b_{i}\}$ where $b_{i}\in AP(a_{i},c_{i})$, one can efficiently
obtain a corresponding construction for $k$ in $\sum_{i\in I'}AP(a_{i}',c_{i}')$.
Of course, such a relation is only meaningful with construction, as
for the majority of the algorithm this relation resolves to $\mathbb{Z}_{n}\subset\mathbb{Z}_{n}$. 

Using the reduction found in \secref{complete-reduction}, it suffices
to algorithmically establish \lemref{main-lemma}. The algorithm is
divided into two parts. In the first part, the \textit{Transformation}
phase, where we will perform operations described below to modify
our sum $T=\sum_{i\in I}AP(a_{i},c_{i})$. We shall see that some
changes to the $\{a_{i}\}$ lead to a constructive inclusion relation.
In the second part, the \textit{packing} phase, we start from a representation
of a set $\sum_{i=0}^{k}AP(a_{i},c_{i})$ and start to include $AP(a_{i},c_{i})$
for $i=k+1,k+2\cdots$ one by one. 

\subsection{Operations in Transformation Phase}

The transformation phase makes use of the following constructive inclusions. 
\begin{fact*}
(Operation 0) The APs with the same common difference $a$ can be
merged and split freely. That is, constructively, $AP(a,k_{1}+k_{2})=AP(a,k_{1})+AP(a,k_{2})$. 
\end{fact*}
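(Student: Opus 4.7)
The plan is to verify both the set equality $AP(a,k_{1}+k_{2})=AP(a,k_{1})+AP(a,k_{2})$ and the explicit constructive correspondence between the two representations. The first step is simply to unroll the definitions: $AP(a,k_{1})+AP(a,k_{2}) = \{(i+j)a : 0\le i\le k_{1},\, 0\le j\le k_{2}\}$, which by the elementary interval identity $[0,k_{1}]+[0,k_{2}]=[0,k_{1}+k_{2}]$ in $\mathbb{Z}_{\ge 0}$ collapses to $\{ma : 0\le m\le k_{1}+k_{2}\}=AP(a,k_{1}+k_{2})$.

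For the constructive part, I would describe two explicit $O(1)$ conversions. Given a construction $k=ma\in AP(a,k_{1}+k_{2})$ with $0\le m\le k_{1}+k_{2}$, I would decompose $m=i+j$ by setting $i=\min(m,k_{1})$ and $j=m-i$, producing $b_{1}=ia\in AP(a,k_{1})$ and $b_{2}=ja\in AP(a,k_{2})$ with $b_{1}+b_{2}=k$. Conversely, from $b_{1}=ia\in AP(a,k_{1})$ and $b_{2}=ja\in AP(a,k_{2})$ I would return the representative $(i+j)a\in AP(a,k_{1}+k_{2})$, valid since $0\le i+j\le k_{1}+k_{2}$.

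There is essentially no obstacle here; the content of the fact is just the integer interval identity transported to $\mathbb{Z}_{n}$ by scaling. The only subtle point worth acknowledging is that when $k_{1}+k_{2}\ge p$ the map $m\mapsto ma$ on $\{0,1,\ldots,k_{1}+k_{2}\}$ ceases to be injective, so a single element may admit several valid constructions; but this redundancy is harmless and affects neither the set equality nor the correctness of the two conversions above, so the lemma goes through without any further work.
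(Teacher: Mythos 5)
Your proof is correct and is the natural one; the paper itself states Operation 0 without proof, treating it as self-evident (remarking only that it is "implicitly applied through our representation of the sumset using $A$"). Your decomposition $i=\min(m,k_{1})$, $j=m-i$ is exactly the $O(1)$ constructive map the algorithm implicitly relies on, and your observation that non-injectivity of $m\mapsto ma$ for $k_{1}+k_{2}\ge p$ is harmless is right, since a construction carries the integer multiplier $m$ and not merely the value $ma\in\mathbb{Z}_{p}$.
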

\begin{fact*}
(Operation 1) Let $a,b\in\mathbb{Z}_{p}$, $x,y\in\mathbb{N}$, where
$ay=bx$. Then $AP(a,y)+AP(b,x)\supset AP(a,2y).$
\end{fact*}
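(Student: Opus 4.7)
The plan is to directly exhibit, for each element of $AP(a,2y)=\{0,a,2a,\ldots,2ya\}$, an explicit decomposition as a sum of an element of $AP(a,y)$ and an element of $AP(b,x)$. This approach simultaneously gives the set-theoretic inclusion and the constructive nature required by the definition in Section~\ref{sec:overview}: we need to show how, given a construction of $k$ in the smaller sumset $AP(a,2y)$, we can efficiently produce a construction of $k$ in the larger sumset $AP(a,y)+AP(b,x)$.

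The strategy is a case split on the coefficient $j\in[0,2y]$ of a representative element $ja\in AP(a,2y)$. In the easy case $j\in[0,y]$, the element $ja$ already lies in $AP(a,y)$, so we may write $ja=ja+0$ with $0\in AP(b,x)$. In the case $j\in[y+1,2y]$, I would invoke the hypothesis $ay=bx$ exactly once, writing
\[
ja \;=\; (j-y)a + ya \;=\; (j-y)a + xb,
\]
and observing that $(j-y)\in[1,y]$ so $(j-y)a\in AP(a,y)$, while $xb\in AP(b,x)$ since $x$ is the second argument.

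Both branches of the case split run in $O(1)$ time given the coefficient $j$, so the map from constructions on the left-hand side to constructions on the right-hand side is efficient in the sense required by the paper. There is no genuine obstacle: the only content of the argument is recognising that the hypothesis $ay=bx$ lets one ``pivot through the endpoint,'' using $xb$ as a single bulk contribution from the second progression to cover the overflow beyond $ya$. All remaining coefficients can be absorbed into the first progression.
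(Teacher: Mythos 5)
Your proof is correct and matches the paper's argument essentially verbatim: both split on the coefficient $j\in[0,2y]$, handle $j\le y$ by taking $0$ from the second progression, and handle the overflow by using $ya=xb$ to pivot the excess into the single bulk element $xb\in AP(b,x)$. The only cosmetic difference is that you use disjoint ranges $[0,y]$ and $[y+1,2y]$ where the paper overlaps at $y$, which changes nothing.
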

\begin{fact*}
(Operation 2) Let $1\leq a,b<n$, and $v,w\in\mathbb{Z}_{n}$. Suppose
$va=wb$, and $v$is coprime to $w$. Let $t\geq v$, $s\geq w$,
then there exist a constant $c\in\mathbb{Z}_{n}$ such that $c+AP(z,u)\subset AP(a,t)+AP(b,s)$,
where $z=\frac{a}{w}=\frac{b}{v}$ taken in $\mathbb{Z}_{n}$. And
$u$ can be taken to be $tw+sv-2(vw-v-w+1)$
\end{fact*}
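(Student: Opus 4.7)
The plan is to translate the containment into an integer representability question of Frobenius (coin-problem) type and then verify it by a short case analysis combined with a symmetry argument.

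First I would note that $va = wb$ together with $z = a/w = b/v$ gives $a = wz$ and $b = vz$ in $\mathbb{Z}_n$, so
\[
AP(a,t) + AP(b,s) = \{(iw + jv)z : 0 \leq i \leq t,\ 0 \leq j \leq s\}.
\]
Taking $c = (v-1)(w-1)\cdot z \in \mathbb{Z}_n$, we have $c + AP(z,u) = \{((v-1)(w-1) + \ell)z : 0 \leq \ell \leq u\}$, whose set of integer coefficients is exactly $[(v-1)(w-1),\ tw + sv - (v-1)(w-1)]$. Thus the target containment reduces to the purely integer claim that every $k$ in this interval can be written $k = iw + jv$ with $0 \leq i \leq t$ and $0 \leq j \leq s$.

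For the representability I would appeal to the Chicken McNugget theorem: since $\gcd(v,w) = 1$, any $k \geq (v-1)(w-1)$ admits a representation $k = i_0 w + j_0 v$ with $i_0 \in [0, v-1]$ and $j_0 \geq 0$. The hypothesis $t \geq v$ makes $i_0 \leq t$ automatic. If also $j_0 \leq s$ we are done; otherwise I would slide along the lattice direction $(v, -w)$, replacing $(i_0, j_0)$ by $(i_0 + mv, j_0 - mw)$ with $m = \lceil (j_0 - s)/w \rceil$. Using $s \geq w$ this forces $j \in [0, s]$, and a short arithmetic check based on $v j_0 = k - i_0 w$ together with the ceiling bound $m \leq (j_0 - s + w - 1)/w$ shows $i_0 + mv \leq t$ as long as $k$ is not too close to the upper end of the range.

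For the remaining upper portion of $k$, I would invoke the symmetry $(i,j) \leftrightarrow (t-i,\ s-j)$, which sends $k$ to $tw + sv - k$ and therefore reduces that portion back to the already-handled case. The hypotheses $t \geq v$ and $s \geq w$ are exactly enough to ensure that the direct argument and its symmetric image overlap, so together they cover the whole interval $[(v-1)(w-1),\ tw + sv - (v-1)(w-1)]$. The main bookkeeping hurdle is tracking the ceiling corrections in the shift parameter $m$ and verifying the exact crossover point between the direct and symmetric regimes; once those arithmetic inequalities are in hand, the proof is finished.
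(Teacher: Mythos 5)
Your proposal follows essentially the same route as the paper's proof: rescale so the problem becomes a Frobenius coin-problem question over the integers, take a nonnegative representation guaranteed by the Chicken McNugget theorem, and shift along the lattice direction $(v,-w)$ to bring both coordinates into range, using the two endpoints of the interval (length $tw+sv-2(vw-v-w+1)$) to control the overshoot. The one place where your sketch is more tentative than it needs to be is the claim that the direct sliding argument only works ``as long as $k$ is not too close to the upper end,'' forcing a symmetry step. In fact, a sharper bound closes the gap without symmetry: after choosing $m = \lceil (j_0 - s)/w \rceil$ so that $j := j_0 - mw \in [s-w+1,\,s]$, one has $iw = k - jv \le k - (s-w+1)v$, and substituting the upper bound $k \le tw + sv - (vw - v - w + 1)$ gives $iw \le tw + w - 1$, hence $i \le t$. (The $i_0$ cancels in this estimate; your looser bound $j_0 \le k/v$ discards the $-i_0 w$ term, which is what makes the inequality appear to fail near the top of the range.) So the reflection $(i,j)\leftrightarrow(t-i,s-j)$ is a valid observation but unnecessary: the direct argument already covers the whole interval. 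The paper's version of the same idea is symmetric from the outset, taking an arbitrary nonnegative Frobenius representation, noting one cannot have both $a'>t$ and $b'>s$, shifting whichever coordinate overshoots to lie just under its cap, and bounding the other coordinate exactly as above.
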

Details of operation 1 and 2 can be found in \ref{prop:operation-1},
\ref{prop:operation-2} respectively.

In our sum set $\sum AP(a_{i},c_{i})$, let $W=\sum c_{i}$ denote
the total length (note this is sum of (cardinality-1)). Initially,
$W=n-1$. We note that both operations can lead to gains in $W$.
For operation 2, the gain in length is at least $(v+w-2)$ (see note
after \ref{prop:operation-2}). Operation $1$ also gains $W$ if
we operates small-to-large: Operation 1 roughly states that if we
have both $AP(a,y)$ and $AP(b,x)$, we can exchange one for the other.
If we exchanged the $AP(b,x)$ for the $AP(a,y)$ , the gain in $W$
is $y-x$, so we would prefer to have $y>x$. Throughout our algorithm,
we will only exchange the shorter run for the longer run, guaranteeing
gains in $W$.

The reconstruction process after applying the operations may cost
extra time. With our two major operations below, operation 1 takes
$O(1)$ time and operation 2 takes $O(\log x)$ time for reconstruction,
where $x$ is the length of the involved AP. We will only discuss
inclusion relations due to these two operations.

These two operations serve different roles. Briefly, the first operation
is more robust and less destructive, while the second operation has
the potential to gain length in greater efficiency. Using operation-1
directly is enough to establish the first phase of the $O(n\log\log n)$
algorithm, whereas using operation 1 and 2 operation 2 carefully is
needed to establish the $O(n\log\log\log n)$ algorithm.

\subsection{Major Components in Transformation Phase}

Starting from $\sum AP(a_{i},c_{i})$ , our target is to repeatedly
find a valid operation. Recall that $W=\sum c_{i}$ is the total length. 

When $W\geq n$, a valid operation must exist, since the sets $\{a_{i},2a_{i},\cdots,ka_{i}\}$
cannot be all disjoint by considering their size. Our goal is instead
to find these efficiently. If we simply exhaust all possible operations,
this will lead to an $O(n\log n)$ algorithm (see remark after \ref{lem:trim}).
To improve, we must set specific targets, and switch to a different
mode or phases once a target is reached. We will also try to check
for collisions between sets $\{a_{i}\cdots,k_{i}a_{i}\}$ in a certain
order. 

We will keep track of two parameters, $W=\sum k_{i}$, the total length,
and $s$ the ``diversity'', the number of distinct values $i$ such
that $k_{i}>0$. Note that we can (constructively) drop any (preferably
short) $AP(a_{i},k_{i})$ to reduce $s$by $1$, but also reducing
$W$. 

It is desirable to increase $W$ and decrease $s$. We identify three
different subprocess, which performs a series marking and collisions
resolution that changes $T$. Each of the following steps are $O(n)$.
\begin{itemize}
\item Enrichment Step: This step increases $W$ without decreasing $s$.
\item Trim Step: This step reduces $s$ mildly, but also get rid of short
collisions (situations where $ci=dj$ for which $c,d$ are small,
and $A[i]\geq c$, $A[j]\geq d$) , which enables operation 2 to be
applied more effectively. This step doesn't decrease $W$.
\item Growth Step: The main step, this can be performed once $s$is sufficiently
small ($s\leq\frac{n}{1000}$). If $s=\frac{n}{k}$, we obtain the
new $s$to satisfy $s\leq\frac{n}{k^{\frac{4}{3}}}$, and $W$ will
be reset to about $n$.
\end{itemize}
A growth step includes an enrichment step and a trim step followed
by an extra process. The $O(n\log\log\log n)$ algorithm starts by
performing one enrichment and one trim, then perform $\log\log\log n$
steps of growth. This would make $s\leq\frac{n}{\log n}$. We shall
see that this allows phase two to be completed in $O(n)$.

An important simplification is to only use trim step and the pack
phase to give $O(n\log\log n)$ solution. The trim step is considerably
simpler than the enrichment step, hence more practical in real implementations.
The details can be found in \ref{sec:practical}.

\subsection{Second Phase }

The second part can be considered a generalization of the method found
in \cite{main}. The method used in their simple $O(n\log n)$ algorithm
is as follows: 
\begin{prop*}
Let $S\subset\mathbb{Z}_{n}$ and $b\in\mathbb{Z}_{n}$, then we can
find an extra element $c$ such that $S\cup\{c\}\subset S+\{0,b\}$
in $O(\log n)$ time. 
\end{prop*}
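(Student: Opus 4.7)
The plan is to reduce the task to a binary search along the $b$-orbit. The condition $S \cup \{c\} \subset S + \{0,b\}$ is interesting only when $c \notin S$, in which case it says $c \in (S+b) \setminus S$, i.e.\ $c = s+b$ for some $s \in S$ and yet $c \notin S$. Viewing $\mathbb{Z}_n$ as the cycle generated by $x \mapsto x+b$ (which is a single cycle of length $n$ because $n$ is prime and we may assume $b \ne 0$), what I want is a ``descent'': an element of $S$ whose successor in the cycle lies outside $S$. Such a descent exists whenever $\emptyset \ne S \ne \mathbb{Z}_n$.

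The steps I would carry out are:

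\textbf{Step 1.} Use primitive (2) to pick any $s_0 \in S$ and primitive (3) to pick any $t_0 \notin S$; both take $O(1)$. \textbf{Step 2.} Compute the unique integer $k \in \{1,\dots,n-1\}$ with $s_0 + kb \equiv t_0 \pmod n$ by setting $k \equiv (t_0-s_0)\,b^{-1} \pmod n$, where $b^{-1}$ is obtained by the extended Euclidean algorithm in $O(\log n)$ time. \textbf{Step 3.} Binary search on $\{0,1,\dots,k\}$ to locate a descent. Maintain indices $l < r$ satisfying the invariant $s_0 + lb \in S$ and $s_0 + rb \notin S$, initialized to $(l,r)=(0,k)$. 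At each step, query the midpoint $m=\lfloor(l+r)/2\rfloor$ against $S$ using primitive (1) in $O(1)$ time and update $l \leftarrow m$ or $r \leftarrow m$ accordingly. After $O(\log k) = O(\log n)$ queries we reach $r=l+1$, and $c := s_0 + rb$ is an element outside $S$ satisfying $c - b = s_0 + lb \in S$, so $c \in S+b$, as desired.

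The total cost is $O(\log n)$ for the inversion plus $O(\log n)$ for the binary search. There is no real obstacle here; the only subtle point is that $\mathbb{Z}_n$ carries no natural linear order compatible with $b$-translation, so one cannot binary search directly. The trick is that the cycle structure gives a linear order along the chain from $s_0$ to $t_0$, and step 2 converts set membership along this chain into a monotone-boundary search problem to which ordinary binary search applies. An edge case to note is $b=0$, where $S+\{0,b\}=S$ and no extra $c$ exists; we assume $b\neq 0$, consistent with the usage in the packing phase.
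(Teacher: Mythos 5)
Your proof is correct and follows essentially the same approach as the paper: pick $s\in S$ and $t\notin S$, reparameterize along the $b$-orbit via multiplication by $b^{-1}$, and binary search for a "descent" while maintaining the invariant that the left endpoint lies in $S$ and the right does not. The only cosmetic difference is that you index positions relative to $s_0$ (so the search interval is $\{0,\dots,k\}$) whereas the paper uses absolute coordinates $x=sb^{-1}$, $y=tb^{-1}$ (shifted by $n$ if needed); these are equivalent.
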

It seems that $O(\log n)$ time is the best that can be expected in
this single element case. Our contribution is a generalization that
allows an entire AP to be added at one time, this is a restatement
of \lemref{AP-packing}. 
\begin{prop*}
Let $S\subset\mathbb{Z}_{n}$ , $|S|\leq n-k$ and a given $AP(b,k)$.
Then, we can find a set of values $T$ disjoint from $S$, which has
$|T|=k$, such that $S\cup T\subset S+AP(b,k)$ in $O(\log n+k)$
time. 
\end{prop*}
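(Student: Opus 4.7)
The plan is to upgrade the single-element binary-search of the preceding lemma into a batched procedure that, after $O(\log n)$ setup, produces each of the remaining $k-1$ elements of $T$ in amortized $O(1)$ time. I will view $\mathbb{Z}_n$ as a $+b$-cycle; since $n$ is prime and $b \neq 0$ this is a single $n$-cycle along which $S$ alternates runs and gaps. The $T$-eligible positions are exactly the first $\min(\ell, k)$ members of each gap of length $\ell$, and Cauchy--Davenport (together with the hypothesis $|S| + k \leq n$) guarantees that in total there are at least $k$ such positions.

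First I would use oracle (2) to fetch some $s \in S$ and then invoke the doubling-and-bisection of the single-element lemma to find the smallest $j_0 \geq 1$ with $p_0 := s + j_0 b \notin S$. This costs $O(\log n)$ and yields an anchor $p_0$ sitting at the start of a gap (since $p_0 - b \in S$). Next I would walk forward from $p_0$ in steps of $b$, maintaining a counter $d$ equal to the number of $+b$-steps since the most recent $S$-element (initially $d = 1$). At each new position $p$ I would issue one membership query: if $p \in S$, reset $d \leftarrow 0$; otherwise, the invariant $d \leq k$ certifies that $p \in (S + AP(b, k)) \setminus S$, and $p$ is appended to $T$. The walk halts once $|T| = k$. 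Correctness is immediate from the invariant; the subtlety is the running time.

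The hard part will be preventing the walk from burning $\Theta(r)$ steps inside an $S$-run of length $r$. I plan to cope by detecting when the counter $d$ has stayed at $0$ for sufficiently many consecutive steps and, at that moment, invoking a doubling-and-bisection which skips to the end of the current $S$-run in $O(\log r)$ time, after which the walk resumes at the first gap-position with $d$ reset to $1$. The total running time becomes $O(\log n) + O(k) + O(\sum_i \log r_i)$, where the sum is over skipped $S$-runs, and the technical heart of the proof is the bound $\sum_i \log r_i = O(\log n + k)$. My intended route is a case split: either the first gap already has length $\geq k$ (in which case the procedure finishes inside that gap at cost $O(\log n + k)$ with no skips), or the $k$ collected elements of $T$ must come from several gaps, in which case I would pair each skipped $S$-run with its subsequent gap and invoke convexity of $\log$ on the resulting run lengths to absorb $\sum_i \log r_i$ into the $O(\log n + k)$ target. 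The second case's amortization is the piece I expect to be most delicate.
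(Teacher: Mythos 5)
Your sequential walk-and-skip approach is genuinely different from the paper's. The paper's proof uses a recursive \texttt{fillgap(x,y)} bisection in which, at the start of every call, $x\in S$ and $y\notin S$, and by the time the call returns the value $y$ has been inserted into $S$ (unless the quota of $k$ insertions is already met). The bound then comes from two facts: the recursion depth never exceeds $\log n$ because each level halves $\operatorname{dist}(x,y)$, and at any instant $\#\text{calls}-\#\text{returns}$ equals the stack depth, so after $\log n+k$ calls there have been at least $k$ returns, each of which (unless already done) added an element. Crucially the procedure inserts each discovered element into $S$ \emph{before} continuing, so the $\log n$ search cost is paid once globally, and every further unit of work is charged to an insertion.

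The gap in your proposal is the claimed bound $\sum_i\log r_i=O(\log n+k)$ over the skipped runs; this is false. Take $S$ to consist of $k$ runs, each of length roughly $(n-k)/k$, separated by $k$ singleton gaps; this $S$ is a legal input with $|S|\le n-k$. Your walk collects exactly one element per gap and must skip $k$ runs, giving
\[
\sum_i\log r_i \;=\;\Theta\bigl(k\log(n/k)\bigr),
\]
which for $k=\sqrt{n}$ is $\Theta(\sqrt{n}\log n)$ against a target of $\Theta(\log n+\sqrt{n})=\Theta(\sqrt{n})$. Jensen gives only $\sum\log r_i\le k\log\bigl(\tfrac{1}{k}\sum r_i\bigr)\le k\log(n/k)$, which is exactly the too-large quantity, and pairing each run with its subsequent gap cannot absorb it because a run of length $n/k$ can be followed by a gap of length $1$. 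Used downstream in the packing phase this would give $\Theta(n\log n)$, not $O(n)$, destroying the claimed improvement over the $O(n\log n)$ baseline. The reason your walk cannot be amortized the way the paper's recursion is, is that you pay a fresh $\Theta(\log r_i)$ search for every short gap, whereas \texttt{fillgap} amortizes search depth against insertions by inserting as it bisects.

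A secondary flaw: the doubling-and-bisection you invoke does not, in general, ``skip to the end of the current $S$-run.'' Once the doubling step overshoots and lands at some $p+2^ib\notin S$, the interval $[p+2^{i-1}b,\,p+2^ib]$ may span several run/gap alternations, and the bisection returns \emph{some} boundary $x\in S$, $x+b\notin S$ inside it, not necessarily the end of the run you started in; indeed $p+2^{i-1}b\in S$ need not even lie in the same run as $p$. This can silently jump over eligible gap positions, and since you do not insert collected values into $S$, the walk also lacks a clean termination guarantee. The paper's invariant that every return inserts its right endpoint into $S$ is precisely the ingredient that makes the bisection both terminate and be correctly charged.
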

Since after phase $1$, we only have $\frac{n}{\log n}$ APs, we can
include each of them using this lemma in a total of $O(n)$ time.

The rest of the article is organized as follows: We explain the trim
and enrichment step in section 4, the growth step in section 5, the
packing phase in section 6, the reduction from full Erd\H{o}s-Ginzburg-Ziv
theorem in section 7 and discussing the implementation in section
8.

\section{Transformation Phase: Trim and Enrichment steps \label{sec:two-main}}

The algorithm to solve \lemref{main-lemma}starts with a given sumset
$\sum_{i\in\mathbb{Z}_{n}}S(a_{i},c_{i})$ . In phase $1$, we perform
a series of constructive transformations on the sumset until it attains
the desired form. The two following operations can establish inclusions
constructively. 

Firstly, we should represent the sumset $\sum_{i\in\mathbb{Z}_{n}}S(a_{i},c_{i})$
as an array of size $n$. We set $A[i]=c_{i}$ if $a_{i}$ is present,
or $0$ if not. In other words, $A[i]$ is the multiplicity of $i$
in our sumset. All operations can be described by operating on $A$
directly.

Two additional parameters surronding $A$ is also importnat. Define
$W$ to be sum of $A$, and $s$ to be the number of non-zero elements
in $A$. As mentioned in \secref{overview}, our goal in general is
to increase $W$ and decrease $s$.

\subsection{Core Operations}

We begin by examining the possible fundamental operations. 
\begin{fact*}
(Operation 0) Split or merge $AP(a,k_{1}+k_{2})=AP(a,k_{1})+AP(a,k_{2})$. 
\end{fact*}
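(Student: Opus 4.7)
The plan is to verify the set identity $AP(a,k_1) + AP(a,k_2) = AP(a,k_1+k_2)$ directly from the definition $AP(a,k) = \{0, a, 2a, \ldots, ka\}$, and then to check that a construction on either side can be rewritten as a construction on the other side in $O(1)$ time. The whole claim reduces to decomposing an integer index $m \in [0, k_1+k_2]$ as $m = i + j$ with $i \in [0,k_1]$ and $j \in [0,k_2]$, so there is no genuine obstacle; the only thing to be careful about is that the indices $m,i,j$ are tracked as integers in $[0,n-1]$ rather than residues in $\mathbb{Z}_n$, so that the constructive bookkeeping really does fix a representation.

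For the $\subset$ direction (split), given a construction $b = ma$ of an element of $AP(a,k_1+k_2)$ with $0 \le m \le k_1+k_2$, I set $i = \min(m,k_1)$ and $j = m - i$. Then $0 \le i \le k_1$ by construction, and $j \le k_2$ because whenever $m \ge k_1$ one has $j = m - k_1 \le k_2$, while otherwise $j = 0$. Hence $(ia, ja) \in AP(a,k_1) \times AP(a,k_2)$ is a construction for $b$ in the sumset on the right.

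For the $\supset$ direction (merge), given $b_1 = ia \in AP(a,k_1)$ and $b_2 = ja \in AP(a,k_2)$, I output $b_1 + b_2 = (i+j)a$, which lies in $AP(a,k_1+k_2)$ since $0 \le i+j \le k_1+k_2$. Each of the two transformations is a single comparison plus a subtraction (or addition) on integer indices, hence $O(1)$ time, so the identity holds constructively in the sense defined in \secref{overview}, which is all Operation $0$ requires.
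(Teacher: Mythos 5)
Your proof is correct, and it is the natural direct verification: for the split direction, decompose the integer index $m \in [0,k_1+k_2]$ as $i = \min(m,k_1)$, $j = m - i$; for merge, add the indices. The paper does not supply a proof of Operation~0 at all --- it is stated as a fact and the text immediately afterward says it is ``implicitly applied through our representation of the sumset using $A$,'' i.e.\ merging and splitting APs with a common difference $a$ is just adding or partitioning the multiplicity $A[a]$, so no argument was deemed necessary. Your write-up makes explicit the $O(1)$ index bookkeeping that justifies the word ``constructively'' in the sense of \secref{overview}, which is a reasonable thing to spell out even though the paper treats it as self-evident. One minor point worth flagging: the set identity as an equality of subsets of $\mathbb{Z}_n$ can degenerate when lengths approach $n$ (repeated residues), but since the construction is tracked via integer indices in $[0,k]$ rather than via the residues themselves, your argument is unaffected --- you implicitly note this, and it is the right thing to be careful about.
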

Opeartion $0$ is implicitly applied through our representation of
the sumset using $A$.

In order to produce a component of $AP(a,x)$ for use in the two operations
that follow, it suffices to have $A[a]\geq x$, and subtract $x$
from $A[a]$.

\begin{fact*}
(Operation 0- Drop) We can modify $A[i]$ to any new $s$ where $0\leq s\leq A[i]$. 
\end{fact*}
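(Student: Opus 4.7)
The plan is to observe that Drop is an immediate corollary of the splitting half of Operation~0. In the array representation, modifying $A[i]$ from its current value to any $s$ with $0 \leq s \leq A[i]$ corresponds to splitting $AP(i, A[i]) = AP(i, s) + AP(i, A[i]-s)$ via Operation~0 and discarding the second summand from the sumset. Equivalently, one notes that $AP(i, s) = \{0, i, 2i, \ldots, si\}$ is literally a subset of $AP(i, A[i]) = \{0, i, 2i, \ldots, A[i] \cdot i\}$ whenever $s \leq A[i]$, so if $T_{\text{new}}$ denotes the sumset after the drop and $T_{\text{old}}$ the original, then $T_{\text{new}} \subset T_{\text{old}}$ termwise.

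To verify that this inclusion is constructive in the sense of the earlier definition, I would take a construction $\{b_j\}$ of some $k \in \mathbb{Z}_n$ in $T_{\text{new}}$. For every $j \neq i$ the corresponding summand is unchanged, so $b_j \in AP(a_j, c_j)$ in both sumsets, and for the modified summand we have $b_i \in AP(i, s) \subset AP(i, A[i])$. The identical tuple $\{b_j\}$ therefore already witnesses $k$ in $T_{\text{old}}$, so the reconstruction map is the identity and runs in $O(1)$ time.

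There is no genuine obstacle here; the statement is essentially a bookkeeping tool that licenses the algorithm to shorten or throw away portions of any AP when preparing inputs for Operations~1 and~2. The one caveat to remember in the surrounding algorithm, rather than in this proof, is that Drop can shrink the overall sumset, so it should be invoked only when either the residual sumset is still guaranteed to cover $\mathbb{Z}_n$ or the dropped piece is immediately reabsorbed by another operation.
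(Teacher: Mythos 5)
Your proposal is correct and matches the paper's (one-line) justification, which simply says the drop is achieved by discarding unneeded elements; you have merely spelled out the obvious constructive inclusion $AP(i,s)\subset AP(i,A[i])$ and noted that the identity map recovers the construction.
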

This is by dropping unneeded elements. 

\subsubsection{Operation 1 \protect 
}\begin{prop}
(Operation 1) \label{prop:operation-1}Let $a,b\in\mathbb{Z}_{p}$,
$x,y\in\mathbb{N}$, where $ay=bx$. Then $AP(a,y)+AP(b,x)\supset AP(a,2y)$.
Throughout this paper, we impose an extra requirement of $y>x$.
\end{prop}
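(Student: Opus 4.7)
The plan is a direct element-by-element verification, using the hypothesis $ay = bx$ (in $\mathbb{Z}_p$) to absorb a single full copy of $AP(b,x)$ into an extension of $AP(a,y)$. Unwrapping definitions, $AP(a,2y) = \{ka : 0 \leq k \leq 2y\}$ and $AP(a,y) + AP(b,x) = \{ia + jb : 0 \leq i \leq y,\ 0 \leq j \leq x\}$, so it suffices to exhibit, for every $k \in [0,2y]$, a pair $(i,j)$ in the allowed box with $ia + jb = ka$.

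I would then split on whether $k \leq y$. For $k \in [0,y]$, the trivial choice $(i,j) = (k,0)$ immediately works. For $k \in [y+1, 2y]$, I would write $k = y + r$ with $r \in [1,y]$ and take $(i,j) = (r,x)$; then, using $xb = ay$, one computes $ia + jb = ra + xb = ra + ay = (r+y)a = ka$, and both coordinates lie in their respective ranges. These two cases cover all of $[0,2y]$ and establish the inclusion.

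The constructive side of the claim, namely the $O(1)$ reconstruction cost attributed to Operation 1 elsewhere in the paper, then falls out for free: given the target index $k$, a single comparison against $y$ selects which of the two closed-form decompositions above to emit, with no search or iteration required.

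I do not anticipate a genuine obstacle; the only thing to be careful about is checking that the substitution $xb = ay$ really does land the second-case coefficients inside $[0,y] \times [0,x]$, which it does by construction since $r \leq y$ and $j = x$ is the maximum allowed. The supplementary hypothesis $y > x$ plays no role in this inclusion; as noted in the surrounding discussion, it is imposed only to guarantee a strict gain in the total-length parameter $W$ when $AP(b,x)$ is swapped for $AP(a,y)$, which matters for the transformation-phase progress argument but not for the set-theoretic statement itself.
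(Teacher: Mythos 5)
Your proof is correct and matches the paper's argument essentially line for line: both split on $k\leq y$ versus $k>y$, use $(k,0)$ in the first case, and in the second case use $ya=xb$ to trade a factor of $y$ in the $a$-coefficient for the full $AP(b,x)$ endpoint. The additional remarks about $O(1)$ reconstruction and the role of $y>x$ are accurate and consistent with the surrounding discussion.
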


\begin{proof}
Consider an arbitrary element $ka\in AP(a,2y)$ (thus $k\in[0,2y]$).
If $k\in[0,y]$, then we can write $ka=(ka+0),$where $ka\in AP(a,y)$
and $0\in AP(b,x)$. If $k\in[y,2y]$, then we can write $ka=(k-y)a+ya,$where
$(k-y)a\in AP(a,y)$ and $ya=xb\in AP(b,x)$. 
\end{proof}
Operation $1$ expressed in terms of $A$ is the following:
\begin{quotation}
({*}) If $ay=bx$, and $A[a]\geq y$ and $A[b]\geq x$, then update
by setting $A[b]\leftarrow A[b]-x$ and $A[a]\leftarrow A[a]+y$. 
\end{quotation}
We denote such a type-1 operation as an $(a,b)$ operation. Each successful
$(a,b)$ operation increases $A[a]$ while decreasing $A[b]$. 

The cost to reconstruct from operation 1 is $O(1)$. The detail can
be found in \subsecref{Recovery-of-Constructions}.

The change in $W$ is $y-x$. Using the extra requirement of $y>x$,
$W$ will be positive. 

We can roughly explain the efficency of operation 1. Assume that $y>x$,
the increase in $W$ is $y-x$, and this can be small if $y$ and
$x$are of similar size. When applying this operation, we frequently
need to undo the operations corresponding to $AP(b,x)$, which costs
$O(x)$. The means overall we need to spend $O(x)$ time for possibly
just $1$ in $W$, making operation 1 not an effective way to gain
$W$. The more important component is that $W$ is non-decreasing
after the operation.

\subsubsection{Operation 2}

We begin by proving the validity of operation $2$.
\begin{prop}
\label{prop:operation-2}Let $1\leq a,b<n$, and $v,w\in[n-1]$. Suppose
$va=wb$, and $v$ is coprime to $w$. Let $t\geq v$, $s\geq w$,
then there exist a constant $c\in\mathbb{Z}_{n}$ such that $c+AP(z,u)\subset AP(a,t)+AP(b,s)$,
where $z=\frac{a}{w}=\frac{b}{v}$ taken in $\mathbb{Z}_{n}$. And
$u$ can be taken to be $tw+sv-2(vw-v-w+1)$
\end{prop}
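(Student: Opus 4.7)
The plan is to reduce the sumset question to a bounded-coefficient Frobenius problem over $\mathbb{Z}$ via scaling, and then combine the Sylvester-Frobenius theorem with an argument using four canonical representations.

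\textbf{Scaling.} From $va = wb$ and $\gcd(v, w) = 1$, the element $z := a/w = b/v \in \mathbb{Z}_n$ is well defined, with $a = wz$ and $b = vz$. Thus
\[
AP(a, t) + AP(b, s) \;=\; z \cdot \bigl\{\, iw + jv : 0 \le i \le t,\ 0 \le j \le s \,\bigr\}.
\]
Finding $c$ and $u$ reduces to locating a block of $u + 1$ consecutive integers inside $R := \{iw + jv : 0 \le i \le t,\ 0 \le j \le s\} \subseteq \mathbb{Z}$. The target is the interval $I = [(v-1)(w-1),\ tw + sv - (v-1)(w-1)]$, of length exactly $u$; one then takes $c = z \cdot (v-1)(w-1)$.

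\textbf{Four representations.} For $N \in I$, set $M := tw + sv - N$; both $N, M \ge (v-1)(w-1)$, so by Sylvester-Frobenius each is representable as a non-negative integer combination of $v$ and $w$. I would consider four candidate representations of $N$: the $i \in [0, v-1]$ canonical representation of $N$, call it $(i_A, j_A)$, valid iff $j_A \le s$; the $j \in [0, w-1]$ canonical representation of $N$, call it $(i_B, j_B)$, valid iff $i_B \le t$; and the two reflected representations $(t - i_A', s - j_A')$ and $(t - i_B', s - j_B')$ obtained from the analogous canonical representations of $M$, valid iff $j_A' \le s$ and $i_B' \le t$ respectively. The hypotheses $t \ge v$ and $s \ge w$ make all the other coordinate bounds automatic.

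\textbf{Contradiction and main obstacle.} Suppose all four validity conditions fail, so $j_A > s$, $j_A' > s$, $i_B > t$, $i_B' > t$. The congruences $(i_A + i_A') w \equiv N + M = tw \pmod v$ and $(j_B + j_B') v \equiv sv \pmod w$, together with the ranges $i_A, i_A' \in [0, v-1]$ and $j_B, j_B' \in [0, w-1]$, yield
\[
j_A + j_A' = (q_1 - \varepsilon)\, w + s, \qquad i_B + i_B' = t + (q_2 - \delta)\, v,
\]
where $q_1 = \lfloor t/v \rfloor$, $q_2 = \lfloor s/w \rfloor$, and $\varepsilon, \delta \in \{0, 1\}$. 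Substituting $j_A + j_A' > 2s$ and $i_B + i_B' > 2t$ forces $(q_1 - \varepsilon) w > s$ and $(q_2 - \delta) v > t$, hence $q_1 \ge q_2 + 1$ and $q_2 \ge q_1 + 1$, a contradiction. So at least one of the four representations is valid, giving $N \in R$; multiplying through by $z$ yields $c + AP(z, u) \subseteq AP(a, t) + AP(b, s)$. The main technical obstacle is recognising that all four canonical representations are jointly needed --- two for $N$ and two for its reflection $M$ --- since any three can leave gaps for intermediate values of $N$.
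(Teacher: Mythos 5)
Your proof is correct, and it takes a genuinely different route from the paper's. Both begin with the same rescaling (set $z=1$ so the problem becomes finding a long run of consecutive integers in $\{iw+jv:0\le i\le t,\,0\le j\le s\}$), and both use the Sylvester--Frobenius threshold $(v-1)(w-1)$ to guarantee non-negative representations of $N$ and of $M:=tw+sv-N$. From there the two arguments diverge. The paper picks \emph{one} Frobenius representation $g=a'w+b'v$, and if a coordinate is out of range (WLOG $a'>t$) it shifts $a'$ to the largest value $a''\le t$ in the same residue class mod $v$, then shows by a single inequality that the new $b''$ automatically lands in $[0,s]$; this handles each $N$ with at most two representations. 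You instead enumerate four canonical representations (the $i\in[0,v-1]$ and $j\in[0,w-1]$ ones for $N$, and the reflections of the analogous ones for $M$) and derive the contradiction $q_1\ge q_2+1$ and $q_2\ge q_1+1$ when all four fail. I checked the identities $j_A+j_A'=(q_1-\varepsilon)w+s$ and $i_B+i_B'=t+(q_2-\delta)v$ and the step from $(q_1-\varepsilon)w>s\ge q_2w$ to $q_1\ge q_2+1$; all correct. One small inaccuracy in your commentary: the claim that ``all four canonical representations are jointly needed'' holds for your particular contradiction argument, but not for the theorem itself --- the paper's shift shows that the representation with $i\in[t-v+1,t]$ (your reflected rep from $M$'s $i$-canonical form) already succeeds whenever the $i\in[0,v-1]$ one fails, so two representations per $N$ suffice. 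The trade-off: the paper's route is marginally shorter, while yours makes the $N\leftrightarrow M$ symmetry explicit and avoids the shift-and-bound step.
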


\begin{proof}
Without loss of generality, we may rescale such that $z=1$ (multiply
everything by $z^{-1}$). Since $z=\frac{a}{w}=\frac{b}{v}$ we have
$v$ is same as $b$ interpreted as an integer, and $w$ is same as
$a$ interperted as integer. The remainder of the proof operate inside
the integers. We claim that everything between $[ab-a-b+1,(ta+sb)-(ab-a-b+1)]$
are inside $AP(a,t)+AP(b,s)$. 

Let $g\in[ab-a-b+1,(ta+sb)-(ab-a-b+1)]$ , using the Frobenius Coin
problem, we find some 

\[
g=a'a+b'b
\]

Where $a',b'\geq0.$

Suppose $a'\leq t$ and $b'\leq s$, we are done. Otherwise, we cannot
have both $a'>t$ and $b'>s$ both, since $g\leq at+bs$. Without
loss of generality, assume $a'>t$, we consider the largest value
$a''$ such that $a''\leq t$ and $a'\equiv a''(\mod b)$. If $a''=a'-kb$,
then by setting $b''=b'+ka$, we see that $a''a+b''b=g$ still, and
$a''\in[t-b+1,t]$, and we have $0\leq a''\leq t,0\leq b''$. It suffices
to check that $b''\leq s$, but this follows as otherwise $g\leq(s+1)b+(t-b+1)a=(ta+sb)-ab+b+a$,
violating $g\leq(ta+sb)-(ab-a-b+1)$

The size of $u$ can be seen to be $(ta+sb)-2(ab-a-b+1)$. We obtain
the size expression by replacing $a,b$ by $w,v$.
\end{proof}
\begin{rem}
The size of $u$ is always at least $2(v+w-1)$, Thus $W$ increases
by at least $v+w-2$. Since we can assume $\min(v,w)\geq1$, $\max(v,w)\geq2$,
we can also write this as $W$ always increase by at least $\frac{v+w}{3}$
for convenience. 
\end{rem}

\begin{prop}
Suppose we also have $t\geq2v,s\geq2w$ in the previous proposition,
then we can take $u\geq2vw$
\end{prop}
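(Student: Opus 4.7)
The plan is essentially a one-line substitution. The previous proposition already gives the formula $u = tw + sv - 2(vw - v - w + 1)$, and I would just bound this from below using the strengthened hypotheses $t \geq 2v$ and $s \geq 2w$.

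Concretely, I would compute
\[
u \;=\; tw + sv - 2(vw - v - w + 1) \;\geq\; (2v)w + (2w)v - 2(vw - v - w + 1) \;=\; 2vw + 2(v + w - 1).
\]
Since $v, w \geq 1$, the extra term $2(v+w-1) \geq 2 \geq 0$, so $u \geq 2vw$ as claimed. No new combinatorial content is required beyond the formula already proved; this is a monotonicity observation about $u$ as a function of $t$ and $s$.

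The only subtle point, if any, is to confirm that the previous proposition's conclusion about the achievable value of $u$ is monotone in $t$ and $s$: increasing $t$ or $s$ can only enlarge the sumset $AP(a,t) + AP(b,s)$, hence can only enlarge the length of the AP $c + AP(z, u)$ we are able to fit inside it. So there is no worry that the formula $u = tw + sv - 2(vw - v - w + 1)$ might not extend to larger $t, s$; in fact the same argument (the Frobenius coin interval $[ab - a - b + 1, (ta + sb) - (ab - a - b + 1)]$) goes through unchanged. Thus no real obstacle arises, and the proposition follows immediately from the previous one by plugging in the stronger bounds on $t$ and $s$.
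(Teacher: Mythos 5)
Your proof is correct and is essentially identical to the paper's: both substitute $t\geq 2v$, $s\geq 2w$ into the formula $u = tw + sv - 2(vw-v-w+1)$ to obtain $u \geq 2vw + 2(v+w-1) \geq 2vw$. The extra paragraph on monotonicity is harmless but unnecessary, since the previous proposition already states that $u$ may be taken equal to the given formula for any admissible $t,s$.
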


\begin{proof}
We need to check $tw+sv-2(vw-v-w+1)\geq2vw$. The left hand-side is
at least $2vw+2wv-2vw+2v+2w-2\geq2vw+2v+2w-2\geq2vw$.
\end{proof}
The previous remarks give the situation for $t,s$ very similar to
$b,a$, giving a linear gain in $W$. The proposition shows quadratic
gains in $W$ when $u,v$ are larger.

By keeping track of the gcd, we adapt the above operation for non-coprime
$a,b$.
\begin{cor}
\label{cor:operation-2-noncoprime}(Operation 2 for non-coprime) Let
$va=wb$ and $g=gcd(v,w)$, then there exist a constant $c\in\mathbb{Z}_{n}$
such that $c+AP(z,\frac{2vw}{g})\subset AP(a,2v)+AP(b,2w)$ where
$z=\frac{a}{w}=\frac{b}{v}$
\end{cor}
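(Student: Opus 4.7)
The plan is to reduce to the coprime case---\propref{operation-2} together with its strengthening stated immediately before the corollary. Write $v=gv'$ and $w=gw'$ with $\gcd(v',w')=1$; since $n$ is prime and $0<g<n$, the integer $g$ is a unit in $\mathbb{Z}_n$, and dividing $va=wb$ by $g$ gives $v'a=w'b$ in $\mathbb{Z}_n$. Thus the coprime pair $(v',w')$ satisfies the compatibility identity that the coprime case of Operation~2 requires, with the original elements $a$ and $b$ unchanged.

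Next, apply \propref{operation-2} to $AP(a,2v)$ and $AP(b,2w)$ with $(v',w')$ in place of $(v,w)$. The size hypotheses hold comfortably: $t=2v=2gv'\geq 2v'$ and $s=2w=2gw'\geq 2w'$. The proposition yields a constant $c$ and the inclusion
\[
c+AP(z',u)\subset AP(a,2v)+AP(b,2w),
\]
with $z'=a/w'\in\mathbb{Z}_n$ and $u=2v\,w'+2w\,v'-2(v'w'-v'-w'+1)=4gv'w'-2v'w'+2v'+2w'-2$. From $w=gw'$ one reads off $z'=a/w'=g(a/w)=gz$, and the elementary inequality $u-\tfrac{2vw}{g}=2v'w'(g-1)+2(v'+w'-1)\geq 0$ (valid for $g,v',w'\geq 1$) allows truncation of the output to length exactly $\tfrac{2vw}{g}$.

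The step I expect to require the most care is reconciling the common difference of the resulting AP with the one named in the corollary. The reduction above produces an AP of step $gz$, whereas the corollary writes the step as $z$. Since $AP(gz,k)=g\cdot AP(z,k)$ is a scaling (not a translation) of $AP(z,k)$, these two subsets of $\mathbb{Z}_n$ differ in general, and the desired inclusion does not follow formally from the one just produced. My plan to bridge the gap is to revisit the integer argument inside the proof of \propref{operation-2}: that proof places an explicit integer interval of length at least $\tfrac{2vw}{g}$ inside the rescaled sumset, and scaling this interval by $z$ rather than by $z'=gz$ should, once the bookkeeping is made rigorous, exhibit the translate $c+AP(z,\tfrac{2vw}{g})$ inside the original sumset. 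The technical subtlety is that under the $z^{-1}$ rescaling the sumset becomes $\{kw+lv\pmod n\}$, whose integer values are only multiples of $g$; recovering a genuine integer interval of length $\tfrac{2vw}{g}$ in this alternative picture---rather than merely an arithmetic progression of step $g$---is the crux of the argument and the point where a careful, non-formal estimate is needed.
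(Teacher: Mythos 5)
Your reduction to the coprime case is correct and is essentially the paper's route: write $v=gv'$, $w=gw'$, cancel the unit $g$ modulo the prime $n$ to get $v'a=w'b$, and invoke \propref{operation-2}. (The paper applies it to the short pieces $AP(a,2v')+AP(b,2w')$ and then scales up by $g$, while you apply it directly to $AP(a,2v)+AP(b,2w)$ with the pair $(v',w')$; both roads give a translate of $AP\bigl(\tfrac{a}{w'},\tfrac{2vw}{g}\bigr)$ inside $AP(a,2v)+AP(b,2w)$, and your length bookkeeping $u\geq\tfrac{2vw}{g}$ is right.)

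The final step you flag as the crux---upgrading the common difference from $gz=\tfrac{a}{w'}$ to $z=\tfrac{a}{w}$---cannot be carried out, because with difference $z$ the claimed inclusion is false in general. Take $n=101$, $a=b=2$, $v=w=2$, so $g=2$, $z=1$, $\tfrac{2vw}{g}=4$: then $AP(a,2v)+AP(b,2w)=\{0,2,4,\dots,16\}$ contains no five consecutive residues, hence no translate of $AP(1,4)$. Exactly as you observe, after the $z^{-1}$ rescaling the sumset consists only of multiples of $g$ (no wraparound occurs here), so a run of $\tfrac{2vw}{g}$ consecutive integers simply is not present when $g\geq2$; no ``careful estimate'' will recover it. The resolution is that the printed statement of \corref{operation-2-noncoprime} has a slip: the difference should be $z=\tfrac{a}{w'}=\tfrac{b}{v'}$ (equivalently $g\cdot\tfrac{a}{w}$), which is precisely what your argument produces. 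The paper's own one-line proof commits the same conflation---it rewrites $g\bigl(c'+AP(\cdot,2v'w')\bigr)$ as $gc'+AP(z,2v'w'g)$, trading a dilation of the difference for a dilation of the length---so the mismatch you noticed is a defect of the statement, not of your argument. Nothing downstream is affected: in \thmref{op2-main} only the length $\tfrac{2ab}{g}$ of the produced AP (and the ability to record whatever its difference is) is used, so your derivation up to $c+AP\bigl(gz,\tfrac{2vw}{g}\bigr)$ is all that is actually needed, and you should stop there rather than look for the extra step.
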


\begin{proof}
Let $v=gv'$ and $w=gw'$, so $v'$ and $w'$ are coprime. Thus $AP(a,2v)+AP(b,2w)=g(AP(a,2v')+AP(b,2w')\supset g(c'+AP(z,2v'w'))=gc'+AP(z,2v'w'g)=gc'+AP(z,\frac{2vw}{g})$,
so taking $c=gc'$ gives the desired result.
\end{proof}
To support construction, we can use extended euclidean algorithm.
Therefore, unlike operation $1$, this step takes $O(\log\min(a,b))$
instead of $O(1)$. This does not increase runtime because the actual
number of operations to be performed is less than $n$ (see \remref{op2-collisions}),
but we must carefully track the recovery cost of operation 2.

When applying operation $2$, we will store and keep track of the
shift $c$. It suffices to add to the total shift in each operation
$2$. When recovering an overall solution, if we wish to find a solution
for $c$ in \ref{lem:main-lemma}, we instead look for consturction
that $c-g$ where $g$ is the total shift. We shall ignore the shift
form this point on and this should not cause any difficulties.

Even though operation $2$ is more effective at increasing $W$, relying
solely on increasing $W$ is insufficient on its own. In order to
guarantee a construction through $W$ alone, we need $W\geq n^{2}$,
and even if we only aim to reduce the number of AP to $\frac{n}{c}$,
there is no better bound than requiring $W\geq cn$, and proceeding
like this for $c=\log n$ will give us no better than $O(n\log\log n)$.
Instead, increasing $W$ will become secondary, and we focus on lowering
the number of APs. Increasing $W$ to an amount linear in $n$ enables
us to discard some APs, and this is the main way why increasing $W$
is helpful.

\subsection{Overview of the Algorithm variants}

Define the weight $W$ as the sum of the array $A$. Let $s$ be the
number of distinct values $a$ for which $A[a]>0$. It is desirable
for $W$ to increase, and for $s$ to decrease. Our overall target
in transformation phase is to reach a state where: 
\[
(\#)W\geq2n,\text{and }s\leq\frac{n}{\log n}
\]

We now give a detailed description of the three types of processes
needed. 
\begin{enumerate}
\item Enrichment Step: Aims to increase $W$. This runs a BFS-like process
and collisions are resolved with operation 2.
\item Trim step: Aims to decrease $s$ and eliminating short collisions.
This runs a BFS-like process and collisions are resolved with operation
1.
\item Growth step: Aims to decrease $s$ quickly. This prepares lots of
short APs and operate on them in 1 pass. A growth step always begin
with both a enrichment and trim step. 
\end{enumerate}

\subsection{The enrichment and trim steps}

The algorithm performs a sequence of operations on $(c,i)$ which
is known as a \textit{marking operation}. This involves marking the
cell $ci$. If a cell is marked twice by $(c,i)$ and $(d,j)$, we
call this a \textit{collision}, and we use the operations to make
sure at least one of the two markings are no longer valid. 

Both algorithm are identical, except from changes in 
\begin{itemize}
\item Termination condition 
\item The choice of operation to resolve collisions 
\item Whether it is necessary to choose the minimum value $c$ in marking
operations $(c,i)$
\end{itemize}
\begin{defn}
(Trim Variant). We call this a trim step when the algorithm is run
with operation 1. The algorithm has a target phase $k$. The algorithm
terminates when the next target value is $(c,i)$ and $c>k$. This
variant must pick the smallest possible $c$ in each marking of $(c,i)$.
\end{defn}

\begin{defn}
(Enrichment Variant). We call this enrichment step when the algorithm
is run with operation 2. The algorithm has a target weight $W$. The
algorithm terminates when the current $W$ is at least the target.
This variant may pick any $c$ in each marking of $(c,i)$
\end{defn}

\subsubsection{Initial Conditions}

We begin the description of the algorithm. We initialize the data:
\begin{itemize}
\item $A[i]$ , integer array, the array encoding, taken as input. 
\item $C[i]$, integer array, number of multiples of $i$ that are marked,
initially all $0$.
\item $mark[i]$, array of pairs of integers, an array that marks certain
multiples as visited, all entries are initially empty.
\item A constant shift $g$, where initially $g=0$. $g\in\mathbb{Z}_{n}$
\end{itemize}
Each of these arrays are indexed with $i\in\mathbb{Z}_{n}$

We also track two parameters, 
\begin{itemize}
\item $W$, which is always sum of $A$, the total number of elements with
multiplicity. 
\item $s$, which is equal to the number of non-zero elements in $A$
\end{itemize}

\subsubsection{Invariants Maintained\label{subsec:Invariants-Maintained}}

Throughout the algorithm, and in-between each step, the following
invariants must be satisfied. We put these in front since it is needed
to make sense of the algorithm.
\begin{itemize}
\item $mark[i]$ is either empty, or a pair $(k,a)$ where $k\leq A[i]$,
$k\in[1,n-1]$, $a\in\mathbb{Z}/n\mathbb{Z}$ and $i=ka$. 
\item For each of $c=1,2,\cdots C[i]$, the pair $(c,i)$ is marked (so
$A[ci]=(c,i)$ )
\item $C[i]\leq A[i]$
\end{itemize}
The first two are by design. The part $C[i]\leq A[i]$ should also
be clear from the design of the algorithm.

\subsubsection{Main process}

We defer the proof of claim 1 and 2 below to the next subsection.

The algorithm is to repeatedly find any (Enrichment variant), or smallest
(Trim variant) $k$ and a valid $i$(Claim 1: This can be done linearly)
, such that $C[i]<k\leq A[i]$, and then attempt to mark $(c,i)$
on $ci$, we call this one marking operation. 
\begin{enumerate}
\item If $ci$ is not marked, write $mark[ci]=(c,i)$, update $C[i]\leftarrow C[i]+1$. 
\item Suppose we have $mark[ci]=(d,j)$ (Note 1), note that we must have
$gcd(c,d)=1$ (Claim 2). We call this a collision. We now perform
as many operations of the expected types using $i$ and $j$, more
precisely: 
\begin{enumerate}
\item If we are using operation $1$, w.l.o.g. $c>d$. Define $y=\lfloor\frac{A[j]}{d}\rfloor$.
We update $A[i]\leftarrow A[i]+yc$, $A[j]\leftarrow A[j]-yd$. then
we will increase $A[i]$ (where w.l.o.g. $c>d$) according to \propref{operation-1}.
\item If we are using operation $2$, we set $A[z]\leftarrow A[z]+(A[i]\times d+A[j]\times c)-2(cd-c-d+1)$
(where $z$ is defined as in \propref{operation-2}), we also updates
$g$ by the given shift in \propref{operation-2}. Then, we set $A[i]=0,A[j]=0$.
\end{enumerate}
Cleanup: After the changes to $A[i]$, for any values $i$ where $C[i]>A[i]$,
we remove the marks corresponding to $(c,i)$ where $A[i]<c\leq C[i]$,
then, we set $C[i]=A[i]$.
\item The algorithm terminates with either 
\begin{itemize}
\item The target phase is reached (Trim)
\item The target $W$ is reached (Enrichment)
\item Any $A[i]\geq n-1$. We end the entire algorithm with Special Case
I below.
\item All $A[i]=C[i]$, so no further step 2 is possible. We end the entire
overall algorithm with Special Case II below.
\end{itemize}
\end{enumerate}
Note 1: Since we terminate whenever $A[i]\geq n-1$, we have $ci=dj$
where $c,d\leq n-1$. $c$ cannot equal $d$ since if $(d,i)$ is
already marked, we are not marking the same pair again. This means
we really do have $i\neq j$.

\subsubsection{Claims of Invariants}

We will collect the claims and prove them in specified order
\begin{enumerate}
\item We can find the smallest required $k$ in $O(1)$ each 
\item In step 2, when attempting to mark $(c,i)$ to $(d,j)$ we have $gcd(c,d)=1$
\item $W$ is non-decreasing throughout
\item For any $(c,i)$ and $(d,j)$ where $c\leq C[i]$, $d\leq C[j]$,
we have $ci\neq dj$
\item Sum of $C$ is at most $n-1$
\item In Trim variant, all marking operation $(c,i)$ have $c\geq d$ for
any existing markings $(d,j)$.
\item In Trim variant, $A[i]$ can only increase during a marking operation
$(c,i)$ for some $c$.
\item In Trim variant, if any $(c,i)$ is unmarked, then $A[i]=C[i]$ for
this $i$. 
\item In Trim variant, if $A[i]=C[i]$ for some $i$, then this statement
will continue to hold for this $i$.
\item The algorithm either continues indefinitely or terminates in one of
the described ways.
\end{enumerate}
\begin{proof}
(1) The two variants perform both findings differently. We assume
correctness of (9).

In enrichment variant, we only need to find any such $i$. We can
maintain a set of candidates. Whenever some $A[i]$ is increased,
$i$ is added to the set. We can keep checking any elements in this
set for marking operations, and once it failed to be valid, we remove
it. Using the number of elements in the set as potential, this method
is clearly $O(1)$ amortized. 

In the trim variant, we can use the set $S_{i}$ to represent values
that have $C[i]<i<A[i]$, starting with $S_{1}=[n]$. As long as $S_{i}$
is nonempty, we can take any value $v\in S_{i}$ and check if $C[i]<v<S_{i}$.
If yes, we can perform the marking operations on it, and add it to
$S_{i+1}$. Otherwise, we delete it from $S_{i}$ (and all subsequent
sets). The deletion is correct due to (9), so $i$ will not belong
to any further sets. The total deletion cost is $O(n)$, so this works
in $O(1)$ amortized also. 

In either cases, the set can be realized by either a HashSet (most
general and well known), or a linked list (simple and performant).

(2) If $gcd(c,d)>1$, let $c=gc',d=gd'$, then with \subsecref{Invariants-Maintained},
$(c',i)$ and $(d',j)$ are both marked as $c'<c,d'<d$, which is
a contradiction as $c'i=d'j$

(3) The only time $A[i]$ changes is after a collision. By the remarks
after \propref{operation-1} or \propref{operation-2}, we see that
$W$ is overall non-decreasing. 

(4) Similar to (2), if $ci=dj$ they would have occupied the same
cell, which is a contradiction

(5) The sum of $C$ is identical to the number of marked cells, since
each cell can only be marked at most once, this is $\leq n-1$

(6)-(9). Suppose (6)-(9) holds in the previous steps. Suppose the
next marking is $(c,i)$, and there exist a mark $(d,j)$ where $c<d$.
Using (9), at no previous moments can we have $A[i]=C[i]$. Consider
the state of marking $(d,j)$, since it is chosen over $(c,i)$, $(c,i)$
must not had been a valid choice: Either $c\leq C[i]$, or $c>A[i]$.
If $c\leq C[i]$, then $(c,i)$ is already marked, using (8) and (9),
$A[i]=C[i]$ just before the current marking, then we cannot have
chosen $(c,i)$ for the current marking. If $c>A[i]$, then consider
$(A[i],i)$. If $A[i]\geq C[i]$, then $A[i]=C[i]$ and as previously
shown, this leads to contradiction and we cannot choose $(c,i)$ for
the current marking, so $(A[i],i)$ had not been marked. Apply the
same argument again on $(A[i],i)$ which is possible as $d>c>A[i]$,
we must now have $A[i]\leq C[i]$ or $A[i]>A[i]$, second of which
is impossible. Thus, we have $A[i]\leq C[i]$, forcing $A[i]=C[i]$,
and leads to contradiction. This proves (6).

(7) The only way $A[i]$ changes is if it is part of the type-1 operation.
Suppose the current marking operation is $(c,i)$ and there exists
a mark $(d,j)$, we have $c\geq d$ by $(6)$, thus by definition
of operation 1, we would increase $A[i]$ and decrease $A[j]$. This
shows that the current target of marking $i$ can only ever increase
$A[i]$ increases and no other entries. This proves (7)

(8) An unmarking can only happen as part of the cleanup in the step
2. During an unmarking from $(c,i)$ and $(d,j)$, $A[j]$ changes
by $A[j]\rightarrow A[j](\mod d)$. As $C[i]\geq d$ before the unmarking,
and $\text{New }A[i]<d$ , we must have $C[i]>\text{New }A[i]$, which
by cleanup of step 2, will have $C[i]$ set to $A[i]$

(9) Because $C[i]=A[i]$, $i$ can no longer be chosen as marking
operations, this means $A[i]$ will no longer increase by $(7)$.
If $C[i]$ decrease, it must be deal to an unmarking, so apply $(8)$
again to see that $A[i]=C[i]$ after this.

(10) The algorithm must start with $W\geq n-1$. If there are no collisions,
and $W=n-1$ still holds, then special case II must apply. In other
cases, we cannot reach a state of $A[i]=C[i]$ without any collisions. 

\end{proof}
\begin{prop}
\label{prop:trim-variant-alternate}The trim variant is also equivalent
to the following: In phase $k$, mark all $(k,i)$ for which $i\leq A[i]$,
then perform the phases from $1$ to $n$ in order. Note that the
operation $1$ may disable some value $i$from satisfying $i\leq A[i]$
even if it holds at the start of the phase. We simply skip over any
values where $i\leq A[i]$ no longer holds as they are processed.
\end{prop}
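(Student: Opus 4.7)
My plan is to show that the phased procedure is a valid realization of the trim variant's scheduling rule, with the within-phase freedom absorbed into the order of traversal. (I interpret the condition ``$i\leq A[i]$'' as a typo for ``$k\leq A[i]$,'' since the admissibility of a target $(k,i)$ in phase~$k$ is $C[i]<k\leq A[i]$.) The first ingredient is claim~(6): the sequence of chosen $c$-values in a trim run is non-decreasing, so the run partitions cleanly into consecutive phases indexed by~$c$, and the trim termination trigger ``the next valid target has $c>k$'' is synonymous with ``all admissible $(k,\cdot)$ have been dealt with before moving on.''

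Next I would collapse the admissibility test $C[i]<k\leq A[i]$ to the single condition $k\leq A[i]$. Combining the invariant from \subsecref{Invariants-Maintained} that the marks on index~$i$ always form a prefix $(1,i),(2,i),\ldots,(C[i],i)$ with claim~(6), each successful mark on~$i$ raises $C[i]$ by one while cleanup can only lower it, so $C[i]\leq k-1<k$ holds automatically on entry to phase~$k$. Hence the phased test ``$k\leq A[i]$'' coincides with trim admissibility throughout phase~$k$, and because the trim rule leaves the within-phase choice of~$i$ entirely free, the phased traversal is a legitimate realization. By claim~(7), the only way $A[i]$ can change mid-phase is for $i$ to act as the collision partner of another operation in the same phase, and this is precisely the situation the ``skip~$i$ when $k\leq A[i]$ no longer holds'' clause addresses.

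The hard part will be checking that no admissible pair $(c,i)$ with $c<k$ re-appears at the start of phase~$k$, for then the phased order would silently miss a target the trim rule would legitimately take at that moment. Concretely, I must show that once $A[i]$ has dropped strictly below some threshold it can never rebound above it. I plan to close this with a short circular-dependency argument: by claim~(7) any future rise in $A[i]$ must come from a marking $(c',i)$, whose validity requires $c'\leq A[i]$ at the moment it fires; if $A[i]$ last dropped via an operation-1 step triggered by a marking $(c_{\text{new}},j)$ with collision partner $(d_{\text{old}},i)$, then the update leaves $A[i]<d_{\text{old}}<c_{\text{new}}$, while claim~(6) forces $c'\geq c_{\text{new}}$. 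Hence $c'\geq c_{\text{new}}>A[i]$, contradicting $c'\leq A[i]$. Once this monotone-crossing lemma is in hand, the two descriptions make the same admissible choices at every step, which is the equivalence asserted by the proposition.
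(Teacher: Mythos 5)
Your proposal is correct and follows essentially the same route as the paper's one-line proof: both rest on claim~(6) to sort the chosen marking operations into non-decreasing order of $c$ and read off the phases, after which the remaining bookkeeping is routine. The ``monotone-crossing'' concern you isolate at the end is real but is already discharged by claims~(8) and~(9) (once $i$ serves as a collision partner it permanently reaches $A[i]=C[i]$, so no marking $(c',i)$ with $c'<k$ can resurface), so you are rederiving a lemma the paper has already supplied; and your reading of ``$i\leq A[i]$'' as ``$k\leq A[i]$'' is the intended one.
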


\begin{proof}
Using Invariant (6), all marking operations can be sorted in increasing
order of $c$. We simply call those who have $c=k$ the phase $k$.
The additional descriptions follow easily.
\end{proof}

\subsubsection{Special Cases \label{subsec:Special-Cases}}

We explain the special cases where the algorithm should be terminated
early. In fact, it must be terminated otherwise it risks behaviors
violating note 1.

\paragraph{Special Case I }

If any $A[i]\geq n-1$, this corresponds to a problem 2 of needing
to represent any value $c$as $0$to $n-1$ copies of $i$. Since
$n$ is a prime, this is always possible.

\paragraph{Special case II}

If $A[i]=C[i]$ for all $i$, since there are no collisions, it follows
that the sets $\{i,2i,\cdots A[i]i\}$ must be disjoint. Since the
algorithim begins with $W\geq n-1$, the total size is also at least
$n-1$. Their union is a subset of $[1,\cdots n-1]$ ($0$ does not
belong as otherwise Special Case I applies), thus the two sets must
be equal. In other words, for any target $c\in\mathbb{Z}/n\mathbb{Z}$,
we can represent it as some multiple $ki$ where we have at least
$k$ copies of $i$. 

\subsubsection{Claims of Complexity}

For the Enrichment variant, we need to special handle the following:
If a collision is such that the new $W$ will exceed the target, we
can perform the changes to $A[i]$ and $g$, and terminate the algorithm
immediately (without tracking the $C[i]$), as we are ultimately only
concerned about $A[i]$. This skips the potentially large extra work. 
\begin{lem}
\label{lem:enrich} Let $r\geq1$ be a given target weight multiple.
The Enrichment variant achieves the following. Starting with given
$W$ and $s$, we reach a state with new $W',s'$ where $W'\geq rn$
and $s'\leq s$ in $O(rn)$ time. 
\end{lem}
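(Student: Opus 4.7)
The plan is to verify three claims in turn: $s' \leq s$, the algorithm reaches $W' \geq rn$, and the total runtime is $O(rn)$. The first two will follow directly from previously established invariants and propositions. By \propref{operation-2}, each collision in the Enrichment variant zeros $A[i_k]$ and $A[j_k]$ while depositing a strictly positive amount $\Delta V_k \geq 2(c_k+d_k-1)$ into $A[z_k]$, so the number of alive buckets strictly decreases per collision (with a minor case-check when $z_k$ coincides with $i_k$ or $j_k$); combined with Invariant (3), $W$ is non-decreasing and $s$ is non-increasing throughout the run. The termination rule stops the algorithm as soon as $W \geq rn$; in the alternative Special Case I or II branches, the entire subproblem is solved outright, and Invariant (10) rules out non-termination.

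For the runtime, my plan is to decompose the total work into three components: (a) marking attempts, at $O(1)$ amortized each by the candidate-set construction in the proof of Invariant (1); (b) operation-2 updates, at $O(\log(c_k+d_k))$ per collision via extended Euclidean; and (c) cleanup, at $O(C[i_k]+C[j_k])$ per collision. Component (b) is the easiest of the three to control. Since each operation 2 increases $W$ by at least $c_k+d_k-2$ and $\log x \leq x$ for $x \geq 1$,
\[
\sum_k \log(c_k+d_k) \;\leq\; \sum_k (c_k+d_k) \;\leq\; \sum_k \Delta W_k + 2K \;\leq\; (rn - W_0) + 2n \;=\; O(rn),
\]
where $K \leq s \leq n$ because every collision strictly decreases $s$. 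The special early-termination rule for the final collision (skipping its cleanup when it would push $W$ past the target) ensures we do not spend more than this budget on the last step.

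The main obstacle is bounding the combined marking/cleanup cost $M + \sum_k U_k$. The plan is to start with the accounting identity $M = \sum_k U_k + \sum_i C[i]_{\text{final}}$, since each successful mark is either eventually cleaned by some collision or survives at termination. Invariant (5) then gives $\sum_i C[i]_{\text{final}} \leq n-1$, reducing the task to bounding $\sum_k U_k = O(rn)$. Using $U_k \leq C[i_k] + C[j_k] \leq A[i_k]+A[j_k]$, it suffices to bound $\sum_k (A[i_k]+A[j_k])$. The key structural observation is the mass-balance identity $\sum_k \Delta V_k \;=\; (W'-W_0) + \sum_k (A[i_k]+A[j_k])$, together with the fact that each $V_k$ is consumed at most once later as some $A[i_{k'}]$ or $A[j_{k'}]$ (since after being consumed the bucket is zeroed and only a future $z$-event can revive it). I expect the amortized accounting of this recycling, together with the case split at degenerate $c_k = 1$ (where one additionally leans on $A[i] < n-1$, enforced by the Special Case I termination), to be the main technical work.
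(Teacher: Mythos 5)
Your bookkeeping for $s'\leq s$, for termination, and for the recovery cost (component (b)) is fine, and the identity $M = \sum_k U_k + \sum_i C[i]_{\mathrm{final}}$ with Invariant (5) correctly reduces the problem to bounding $\sum_k U_k$. But the core of the runtime analysis---bounding the cleanup cost $\sum_k U_k$---is left open, and the route you sketch for it does not close. The proposed mass-balance identity $\sum_k \Delta V_k = (W'-W_0) + \sum_k (A[i_k]+A[j_k])$ is just a restatement of how $W$ evolves; it expresses the deposits $\sum_k \Delta V_k$ in terms of the quantity $\sum_k(A[i_k]+A[j_k])$ that you are trying to bound, so it gives you no handle on the latter. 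The ``each deposit is consumed at most once'' observation then yields $\sum_k (A[i_k]+A[j_k]) \leq W_0 + \sum_k \Delta V_k$, which when substituted back collapses to the tautology $0 \leq W'$. Working only with the gain lower bound $\Delta W_k \geq c_k + d_k - 2$ is genuinely insufficient here: a collision at small $(c_k,d_k)$ can force unmarking up to $C[i_k]+C[j_k]$ cells while buying you only $O(1)$ progress in $W$, and nothing in the recycling argument prevents such collisions from happening many times on large, repeatedly refilled buckets.

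The paper's proof avoids this by using a much stronger per-collision gain: it asserts that each operation-2 collision increases $W$ by at least $\tfrac{A[i_k]+A[j_k]}{3}$, i.e.\ proportional to the \emph{current lengths} of the two colliding APs, not merely to $c_k+d_k$. Since the cleanup cost is $O(C[i_k]+C[j_k]) = O(A[i_k]+A[j_k])$ and the total gain is capped at $O(rn)$ by early termination, both the cleanup bound and the recovery-cost bound (via $\log\min(A[i_k],A[j_k]) = O(A[i_k]+A[j_k])$) fall out in one line, with no amortized recycling needed. This stronger gain requires $v,w$ in \propref{operation-2} to be taken commensurate with $A[i_k],A[j_k]$ rather than with the raw collision coordinates $c_k,d_k$; that is the step your argument does not exploit, and without it the charging scheme cannot be made to work. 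If you want to finish along your lines, you would need to either establish this stronger gain for the enrichment variant or find a different potential function that dominates $\sum_k(C[i_k]+C[j_k])$; the recycling identity alone will not do it.
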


\begin{proof}
Consider the potential of $W+E$, where $E$ is the number of marked
cells, $W$ is the length.

We see that the algorithm can proceed normally in Invariant (10) except
when reaching the two special conditions. If a marking operation has
no collision, we increased $E$ by $1$, in $O(1)$ time. If a marking
operation finds a collision between $(c,i)$ and $(d,j)$, then we
will perform operation $2$ (\ref{prop:operation-2}) with $v=A[i]$
and $w=A[j]$. The time cost to complete the operation is $O(1)$
to update $A$ and $c$, and $O(A[i]+A[j])$ to unmark all multiples
of $i,j$. Using the estimation on $W$ after \propref{operation-2},
$W$ always increase by $\frac{v+w}{3}$ , which is $\frac{A[i]+A[j]}{3}$
for our current operation. Since $W$ is bounded above from $rn$,
the total time we need to unmark cells are at most $3rn$. 

Now we examine the recovery cost. The cost is $O(\log(min(A[i],A[j]))$.
Clearly, there is a universal constant $c$ for which $log(min(A[i],A[j]))\leq c\frac{A[i]+A[j]}{3}$,
thus the time cost due to reconstruction is bounded by $(rn)\times c$.

Finally, for the claim on effect of $s$, we note that each operation
$2$ always delete all elements on two APs, and possibly add on one.
The value of $s$ must therefore be non-increasing. 
\end{proof}
For the trim variant, we need the concept of phase, which is \propref{trim-variant-alternate}.
We will only perform the algorithm upto phase $k$, which means only
marking and considering multiples $(c,i)$ and $c\leq k$.
\begin{lem}
\label{lem:trim}Let $k\geq1$ be the given target phase. The Trim
variant achieves the following: Starting with given $W$ and $s$,
we reach a state with new $W',s'$ where $W'\geq W$ and $s'\leq s$.
This can be done in time $O(n+min(n\log k,sk))$. The following additional
properties are satisfied: 
\begin{enumerate}
\item if $(c,i),(d,j)$ is any value with $c\leq min(k,A[i])$ and $d\leq min(k,A[j])$
in the new state, then $ci\neq dj$. 
\item the sum of all $A[i]$ among $i$for which $A[i]<k$ is at most $n$
\end{enumerate}
\end{lem}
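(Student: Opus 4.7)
The plan is to read the two structural claims off the invariants already established for the trim variant, and then account for the runtime via a charging argument. The inequality $W' \ge W$ is exactly invariant~(3). For $s' \le s$, I would observe that every collision in trim is resolved by Operation~1 between a ``new'' source $i$ of phase $c$ and an ``old'' source $j$ of phase $d<c$; the operation raises $A[i]$ (which was already $\ge c\ge 1$, so $i$ was already counted in $s$) and possibly drives $A[j]$ to zero. No new nonzero entry is ever created, so $s$ is non-increasing.

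To extract the post-conditions I characterize the terminal state. Combining invariants~(6)--(9) with the termination condition (the smallest valid candidate has phase $>k$), an inductive check yields the dichotomy that at termination every source $i$ satisfies either $A[i]\le k$ with $C[i]=A[i]$, or $A[i]>k$ with $C[i]\ge k$: if instead $C[i]<\min(k,A[i])$ then $(C[i]+1,i)$ would still be a valid candidate of phase $\le k$, contradicting the termination condition. Post-condition~(1) then follows: if $c\le\min(k,A[i])$ and $d\le\min(k,A[j])$, the dichotomy forces $c\le C[i]$ and $d\le C[j]$, so both $(c,i)$ and $(d,j)$ are live marks and invariant~(4) yields $ci\ne dj$. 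Post-condition~(2) follows because on the subset $\{i:A[i]<k\}$ the dichotomy forces $A[i]=C[i]$, so $\sum_{A[i]<k} A[i] \le \sum_i C[i] \le n-1$ by invariant~(5).

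For runtime, my plan is an amortized account. Setup is $O(n)$; by the first invariant-claim each marking attempt is located in $O(1)$; each attempt is either a successful new mark (cost $O(1)$) or a collision resolved by Operation~1, which costs $O(1)$ for the arithmetic update (since Operation~1's reconstruction is $O(1)$) plus the cost of the subsequent cleanup, and every cleanup unmark is charged against the prior successful marking that created it. Hence the total work is $O(n+M)$, where $M$ is the total number of marking attempts.

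It remains to bound $M\le\min(n\log k,\, sk)$. For the $sk$ bound, invariant~(7) gives that in trim $A[i]$ only ever changes while some $(c,i)$ is itself being marked, so any $i$ with $A[i]=0$ initially is never touched; hence each of the $k$ phases has at most $s$ candidate sources. For the $n\log k$ bound, I fix $c\ge 2$ and look at the instant just before phase~$c$ begins. By invariant~(6) together with the rule of always picking the smallest valid phase, every candidate $i$ for phase~$c$ satisfies $C[i]=c-1$ at that instant, so the live marks $(1,i),\ldots,(c-1,i)$ contribute $c-1$ per candidate to $\sum_j C[j]\le n-1$, bounding the candidate count by $(n-1)/(c-1)$. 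Moreover the candidate set cannot grow within phase~$c$, since invariant~(7) implies the only $A$-value raised by a phase-$c$ collision is that of the source currently being marked. Summing gives $\sum_{c=1}^k M_c = O(s+n\log k) = O(n\log k)$. The main obstacle is justifying that all candidates simultaneously have $C[i]=c-1$ at one consistent moment before phase~$c$, which is exactly what invariants~(6) and~(9) together guarantee.
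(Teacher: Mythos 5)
Your proposal is correct and follows essentially the same route as the paper's proof: establish the terminal-state dichotomy ($A[i]=C[i]\le k$ or $A[i]>C[i]=k$) to extract post-conditions (1) and (2), charge unmarkings against the markings that created them, and bound the per-phase marking count both by $O(n/c)$ (via $\sum_j C[j]\le n-1$) and by $O(s)$ (via invariants (7),(9) keeping zero entries inert). The only differences are cosmetic: you state the second branch as $C[i]\ge k$ where equality in fact holds, and you spell out more explicitly the monotonicity of the phase-$c$ candidate set; neither changes the argument.
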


\begin{proof}
For the time estimates, since unmarking each cells must involve marking
it before, we only need to keep track of the number of cells marked.
We obtain the two estimates by counting how many cells are marked
in each phase. 

In phase $t$, some multiples $(t,i)$ are operated on. As any such
candidates must have $C[i]=t$, and $A[i]>C[i]$, using invariant
(5) we see that the number of such candidates is at most $\frac{n}{t}$.
Sum over the time cost for each phase, this is at most $n(1+\frac{1}{2}+\cdots+\frac{1}{k})=O(n\log k)$.
This gives the first time estimate. 

The second time estimate follows similarly, since there are only $s$active
$AP$, and no marking operations will be performed on the rest (see
invariant 7 and 9), each phase can be completed in $O(s)$. The time
estimate of $O(sk)$ follows. 

An additional $O(n)$ time might be necessary to allocate the necessary
arrays.

For the first extra property, since phase $k$ is completed, for every
value $i$, we have either $A[i]=C[i]\leq k$, or $A[i]>C[i]=k$.
So if $c,d$ satisfy $c\leq min(k,A[i]),d\leq min(k,A[j])$, $(c,i)$
and $(d,j)$ must be marked cells. This shows that they cannot be
equal.

For the second extra property, if $A[i]<k$ for some $i$, then we
must have $A[i]=C[i]$ at this point, and all $A[i]$ multiples of
$i$ have been marked. The sum of such $A[i]$ is therefore no more
than the number of cells marked, which is at most $n-1$ (invariant
5)
\end{proof}
\begin{rem}
These two steps individually give an algorithm for \lemref{main-lemma}.
For the trim step, by completing all $n$ phases, we solve this in
$O(n\log n)$. For the enrichment step, generally we must target $W\geq n^{2}$,
giving an $O(n^{2})$ algorithm. We shall see later that a slight
modification of trim step alone along with phase 2 gives a very practical
and simple $O(n\log\log n)$ algorithm. 
\end{rem}

\section{Transformation Phase: Growth step}

Since our $O(n\log\log\log n)$ algorithm is primarily of theoretical
interest and less practical than the $O(n\log\log n)$ variant, we
do not attempt to refine the constant factors here.
\begin{thm}
\label{thm:op2-main}Let $k\geq1000$. Starting from a sumset where
some $\frac{n}{k}$ values sum to at least $n$, in $O(n)$ time,
we can make operations and reach a situation where some $\frac{n}{k'}$
values sum to at least $n$, and $k'\geq k^{\frac{4}{3}}$.
\end{thm}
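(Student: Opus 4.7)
The plan is to build the growth step as a three-part procedure: a trim step, an enrichment step, and a discard/selection step. Set $p=\lceil k^{2/3}\rceil$. Starting from the hypothesized state (at most $n/k$ nonzero APs summing to $W\geq n$), first apply the trim variant (\lemref{trim}) with target phase $p$. The second time estimate there yields $O(n+sp)=O(n+(n/k)k^{2/3})=O(n)$. After this trim, two key invariants hold: no collision has both coefficients $\leq p$, and the total contribution to $W$ from APs with $A[i]<p$ is at most $n$.

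Second, apply the enrichment variant (\lemref{enrich}) with target weight $4n$, again in $O(n)$ time. By the trim invariant, every collision processed during enrichment has at least one coefficient exceeding $p$. Using the strengthened form of \propref{operation-2}---namely the refinement yielding $u\geq 2vw$ whenever $t\geq 2v$ and $s\geq 2w$---each such collision on operands that are substantially longer than the marking coefficients produces a new AP of length $\Omega(p^2)=\Omega(k^{4/3})$. Informally, the trim step eliminates precisely the regime in which operation~2 would yield only a linear gain, leaving behind only the regime where the gain is quadratic in $p$.

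Finally, partition the resulting APs into \emph{long} (length $\geq k^{4/3}$) and \emph{short} (length $<k^{4/3}$), and discard the short ones. Because the enrichment raised $W$ by at least $3n$, and this increase is concentrated (up to constants) in APs newly created by operation~2---all of which are long by the previous step---the long APs together sum to at least $n$. Since each long AP has length $\geq k^{4/3}$, their count is at most $n/k^{4/3}$, giving the required $k'\geq k^{4/3}$ after selecting a sub-collection whose total length is still $\geq n$.

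The main obstacle is making the length estimate rigorous in the borderline case where a colliding pair $(c,i),(d,j)$ satisfies $A[i]$ only marginally larger than $c$ (so the quadratic form of \propref{operation-2} barely fails). The hypothesis $k\geq 1000$ supplies the slack to absorb such edge cases, and the preliminary trim step---by ensuring no collisions with both coefficients below $p$---rules out the worst regime. A careful amortized analysis tracking a potential such as $W+$(number of marked cells), in the spirit of the proof of \lemref{enrich}, is needed to close this accounting and confirm the concentration of the weight gain in genuinely long APs.
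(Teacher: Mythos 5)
Your proposal misses the key structural idea that makes the paper's growth step work, and the parameters you chose would not yield the claimed bound even if the accounting were closed.

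The most serious gap is that you run a \emph{standard} enrichment step and hope that (i) every collision it processes has a large coefficient, and (ii) every such collision therefore produces a long AP. Neither follows. The enrichment variant, as defined, marks $(c,i)$ for any $c \leq A[i]$, including small $c$; nothing forces both colliding coefficients to exceed $p$. Worse, when operation~2 resolves a collision, the standard enrichment step puts the resulting AP back into $A$, where it participates in future markings and collisions --- but this new AP need not satisfy the no-short-collisions property the trim established, so subsequent collisions involving it can again be small, producing only short gains. The paper sidesteps exactly this by (a) using a \emph{non-standard} marking rule --- only mark $ci$ with $k/2 \leq c \leq A[i]/2$, which guarantees both colliding coefficients are $\geq k/2$ and that the AP is long enough for the quadratic form of operation~2 --- and (b) shunting every collision product into a separate container $T'$ that never re-enters the marking process, so the trim guarantee stays intact throughout. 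Your proof has no analogue of either mechanism, and without them the ``concentration of gain in long APs'' claim cannot be made.

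The parameter $p = \lceil k^{2/3}\rceil$ is also too small. Even granting that some coefficient exceeds $p$, the other could be as small as $1$, so the gcd-corrected product $2ab/g$ need only be $\Omega(p) = \Omega(k^{2/3})$, not $\Omega(k^{4/3})$. The paper trims to phase $k$ so that if $ax=by$ with $k/2 \leq a,b \leq k^{4/3}$ then $\gcd(a,b) < k^{1/3}$ (otherwise dividing out the gcd gives a collision with both coefficients $\leq k$, contradicting the trim), yielding $2ab/g \geq k^{5/3}/4 \geq k^{4/3}$. You would need both coefficients bounded below by $\approx k^{2/3}$ \emph{and} the gcd bounded above, and your trim at phase $k^{2/3}$ supplies only a much weaker gcd bound. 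Finally, the paper also caps $A[i]$ at $k^{4/3}$ before the marking pass; this bound is essential for the $O(n)$ accounting of unmarked cells, and it appears nowhere in your plan.
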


\begin{proof}
Firstly, we should run an enrichment step to gain $W$. We may assume
$W$ is now $10n$. This takes $O(n)$ time (\lemref{enrich}). Now
we only keep the information of the new $A$ and forget all other
structural information like $C$ and $mark$.

Next, we will run a Trim step, we run this upto phase $k$, this costs
$O(sk)\leq O(\frac{n}{k})=O(n)$ by \lemref{trim}. We obtain the
following extra properties. If $i,j$ are two APs that are of length
at least $k$, $\frac{i}{j}$ cannot be represented as a fraction
with small denominator and numerator. To be precise, we need $\frac{i}{j}\neq\frac{a}{b}$
for any $1\leq a,b\leq k$, this is true after a phase $k$ trim step
by \lemref{trim}.

Special handling is required when many arithmetic progressions are
already long. Let $I=\{i|A[i]\geq k^{\frac{4}{3}}\}$. If the total
length among AP of $I$ is at least $n$, then we can report these
values directly. Clearly in this case, we can select a subset $I'$
of $I$ such that the total weight within $I'$ is still at least
$n$ and also $|I'|\leq\frac{n}{k^{\frac{4}{3}}}$.

Otherwise, we drop any length that exceeds $k^{\frac{4}{3}}$, that
is, we set $A[i]\leftarrow\min(A[i],k^{\frac{4}{3}})$. The current
$W$ is still at least $9n$ after this step. 

After the two steps, we reach a state where $W\geq9n$ , $s\leq\frac{n}{k}$,
with the no-short-collisions property. We also have all nonzero $A[i]$
having $A[i]\leq k^{\frac{4}{3}}$.

Now, we start a completely different process of marking cells, that
is similar to enrichment step, but we move all the collision result
to a seperate container. 

Let $T=\sum AP(a_{i},k_{i})$ be our given APs, we maintain the ``next''
set $T'$, which is initially an empty sum of APs. As we found collisions
of the form $c+AP(z,t)\subset AP(x,a)+AP(y,b)$, we will remove the
corresponding $AP(x,a)+AP(y,b)$ from $T$ , and add $AP(z,t)$ to
$T'$. If $T_{end}$ is the ending state of $T$, we would have found
$T_{end}+T'\subset T$. In other words, the new AP produced in a collision
resolution will not be explored in the same step, in contrast to \lemref{enrich}.
The constant shift $c$can be easily kept tracked of, and the total
shift $c'$ is to be subtracted from the final construction target
$k$. 

Instead of marking all multiples, we only mark multiples $ci$ of
$i$ satisfying $c\geq\frac{k}{2}$ but also $c\leq\frac{A[i]}{2}$.
It is easy to see that there are at least $2n$ such multiples, since
there are at least $\frac{9}{2}n$ multiples satisfying $c\leq\frac{A[i]}{2}$,
and the condition $c\geq\frac{k}{2}$ removes at most $\frac{n}{2}$
multiples. Suppose there is a collision, so $ax=by$, we know that
$\frac{k}{2}\leq a,b\leq k^{\frac{4}{3}}$, and that $a\leq\frac{A[x]}{2},b\leq\frac{A[y]}{2}$.
The last condition enables \corref{operation-2-noncoprime} to be
applied. Let $g=gcd(a,b)$, if $g\geq k^{\frac{1}{3}}$, then we have
$a'x=b'y$ for $1\leq a',b'\leq k$, contradicting property of the
trim step. So we must have $\frac{2ab}{g}\geq\frac{(\frac{k}{2})(\frac{k}{2})}{k^{\frac{1}{3}}}=\frac{k^{\frac{5}{3}}}{4}$.
We can now resolve the collision by decreasing $A[x],A[y]$, and record
the $AP(z,t)$ obtained in a different array that represents $T'$.
We will also delete all marked multiples of $i,j$. In the worst case,
this removes at at most $2k^{\frac{4}{3}}$ marked cells per operation
2.

We will continue this until we have encountered at least $n/\frac{k^{\frac{5}{3}}}{4}$
collisions, performing the same amount of operation 2s and obtaining
the same amount of AP, each of length at least $\frac{k^{\frac{5}{3}}}{4}$.
We see that collisions must keep occurring before this state. Recall
we start with a total of $2n$ cells to mark, but we would have removed
a total of at most $2k^{\frac{4}{3}}\times n/(\frac{k^{\frac{5}{3}}}{4})\sim\frac{n}{k^{\frac{1}{3}}}$
marked cells, which is less than $n$ (using $k\geq1000$), so we
will continue to have collision all the way till this aim is achieved.
Moreover, the total number of explored cells clearly do not exceed
$2n$, and total number of removals also do not exceed this number,
so this runs in $O(n)$.

After running this step, we obtain a new set $T'$ that contains some
numbers of APs, with sum of length at least $n$ and each of length
at least $\frac{k^{\frac{5}{3}}}{4}$. This still holds after collecting
the APs with the same common difference. By dropping or trimming some
APs as necessary, we can find a set of at most $\frac{n}{(\frac{k^{\frac{5}{3}}}{4})}$
APs with sum of length at least $n$. Since $\frac{k^{\frac{5}{3}}}{4}\geq k^{\frac{4}{3}}$
for $k\geq1000$, we have reached our desired state in $O(n)$.
\end{proof}
\begin{rem}
\label{rem:op2-collisions}The total reconstruction cost in each step
is also $O(n)$. We see that we only need to resolve and reconstruct
at most $\frac{4n}{k^{\frac{5}{3}}}$ collisions. Since each reconstruction
costs no more than $O(\log k^{\frac{4}{3}})$, this is not a concern.
\end{rem}

We can now achieve our main goal for this variant of phase 1.
\begin{cor}
\label{cor:algo-sharp}There is an $O(n\log\log\log n)$ algorithm
that makes operations until some $\frac{n}{\log n}$ AP have sum of
length at least $n$
\end{cor}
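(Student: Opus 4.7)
The plan is to combine a short preprocessing phase with $O(\log\log\log n)$ iterations of \thmref{op2-main}. Each growth step runs in $O(n)$ time and lifts the parameter from $k$ to $k^{4/3}$, so starting from a constant $k_0$ one reaches $k\geq\log n$ after $O(\log\log\log n)$ iterations, giving total cost $O(n\log\log\log n)$. The only wrinkle is that \thmref{op2-main} requires $k\geq 1000$ to fire, so the preprocessing must produce the required initial shape from the raw input.

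For the preprocessing I would first invoke \lemref{enrich} with target multiple $r=2$ to produce $W\geq 2n$ without increasing $s$, at cost $O(n)$. A routine inspection of the proof of \lemref{enrich} bounds the overshoot from the single terminating collision, so we may assume $W\leq Cn$ for an absolute constant $C$. Next, I apply one trim step (\lemref{trim}) with constant target phase $k_0 = 1000C$, which costs $O(n+n\log k_0) = O(n)$ and does not decrease $W$. By the second additional property of \lemref{trim}, the APs of length strictly less than $k_0$ contribute at most $n$ to the total length, so the APs of length at least $k_0$ contribute at least $W-n\geq n$. There are at most $W/k_0\leq n/1000$ such long APs, and restricting attention to them yields a sumset of at most $n/1000$ APs summing to at least $n$, which is exactly the hypothesis of \thmref{op2-main} with $k=1000$.

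Writing $k_i$ for the parameter after $i$ growth steps starting from $k_0 = 1000$, \thmref{op2-main} gives $k_{i+1}\geq k_i^{4/3}$, so $k_i\geq k_0^{(4/3)^i}$. Reaching $k_i\geq\log n$ requires $(4/3)^i \log k_0 \geq \log\log n$, i.e.\ $i = O(\log\log\log n)$. Each iteration costs $O(n)$, so the total cost including preprocessing is $O(n\log\log\log n)$. After the final iteration at most $n/k_i \leq n/\log n$ APs have total length at least $n$, as required.

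The main technical obstacle is the constant-factor bookkeeping in the preprocessing, specifically pinning down the constant $C$ bounding $W$ after enrichment and then translating the length-concentration output of trim into a bound on the number of long APs. Both are direct consequences of the already-established lemmas, and once the preprocessing is in place the remainder is a routine counting of iterations of a doubly-exponentially growing parameter.
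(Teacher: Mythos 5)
Your proposal follows essentially the paper's own route---enrichment to $W\geq 2n$, a constant-phase trim to produce a collection of long APs, then $O(\log\log\log n)$ iterations of \thmref{op2-main}---and the iteration count and the $O(n\log\log\log n)$ total cost are estimated correctly. However, there is a gap in the preprocessing at the line ``There are at most $W/k_0\leq n/1000$ such long APs.'' The bound $W\leq Cn$ was established after enrichment but \emph{before} the trim step, whereas the quantity $W$ you divide by $k_0$ is the weight \emph{after} the trim. \lemref{trim} only asserts $W'\geq W$, and the trim step can genuinely increase $W$: a collision between markings $(c,i)$ and $(d,j)$ with $c>d$ increases $W$ by $y(c-d)$ with $y=\lfloor A[j]/d\rfloor$, which for $d=1$ can be on the order of $A[j]$. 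So nothing you have cited guarantees that $W$ remains $O(n)$ with your chosen constant $C$ after running trim, and the count $W/k_0\leq n/1000$ is unsupported as written.

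The fix is the one the paper uses and costs nothing. After dropping all $A[i]<k_0$, property (2) of \lemref{trim} gives $W\geq n$, and every remaining length is $\geq k_0\geq 1000$. Therefore \emph{any} $\lceil n/1000\rceil$ of the remaining APs (or all of them, if fewer exist) already have total length $\geq n$; keep those and discard the rest. This produces at most $n/1000$ APs summing to at least $n$, i.e.\ the hypothesis of \thmref{op2-main} with $k=1000$, without requiring any upper bound on $W$ after the trim. With this correction your argument coincides with the paper's proof, up to the inessential choice of trim phase ($1000C$ in your version versus $1000$ in the paper).
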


\begin{proof}
Firstly, we run an enrichment step to get $W\geq2n$ in $O(n)$ \ref{lem:enrich}.
Then, we run a trim step till phase $1000$. This takes time $O(n(1+\frac{1}{2}+\cdots+\frac{1}{1000}))=O(n)$
by \lemref{trim}.

Using property (2) in \ref{lem:trim}, we can drop all values with
$A[i]<1000$, and this reduces $W$ by at most $n$. We still have
$W\geq n$ after, but now every nonzero $A[i]$ satisfying $A[i]\geq1000$.
Drop any $A[i]$ if we still have $W\geq n$ after, we see that we
reach the condition for $(2)$ with \ref{thm:op2-main} with $k=1000$.

Then, we repeatedly apply \thmref{op2-main}. Starting from $k=1000$,
since each step we have $k\rightarrow k^{\frac{4}{3}}$, it is easy
to see that in $\log\log\log n$ steps, we would have $k\geq\log n$.
The desired condition is now met. This takes a time of $O(n\log\log\log n)$.
\end{proof}

\section{Packing Phase}

After either variant of phase 1, we have $O(\frac{n}{\log n})$ APs.
We are now ready to find a (implicit) representation for each value
$v\in\mathbb{Z}_{n}$. 

In \cite{main}, a set $S$ was considered, which denotes the set
of values where a solution with sum $k$ can be found. Given the $n-1$
values $a_{i}$, each $a_{i}$ is considered in order, each time expanding
$S$ by $1$, by finding a value $c\not\in S$ for which $S\cup\{c\}\subset(S)\cup(S+a_{i})$.
The value $c$ can be found in $O(\log n)$ by a modified binary search
as follows. We included the method for comparison.
\begin{prop}
\label{prop:part-two-usual-binary-search}Let $S\subset\mathbb{Z}_{n}$
with $|S|<n$, and $b\in\mathbb{Z}_{n}$, then we can find an extra
element $c$ such that $S\cup c\subset S+\{0,b\}$ in $O(\log n)$
time. 
\end{prop}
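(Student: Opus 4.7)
The plan is to rephrase the task as finding a ``down-transition'' along an arithmetic progression and then locate such a transition by an ordinary one-dimensional binary search. Any valid $c$ must lie in $(S+b)\setminus S$; equivalently, we want an index $k$ with $s_0 + kb \in S$ and $s_0 + (k+1)b \notin S$, where $s_0 \in S$ is some fixed base point. Since $|S| < n$, $b \neq 0$, and $n$ is prime, the translate $x \mapsto x+b$ generates the whole group, so at least one such transition must occur along the cyclic progression $s_0, s_0+b, s_0+2b, \ldots$.

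To convert the cyclic search into an interval search, we first use operation~(2) to obtain some $s_0 \in S$ and operation~(3) to obtain some $c_0 \notin S$. Running the extended Euclidean algorithm once, in time $O(\log n)$, we invert $b$ modulo $n$ and compute the unique integer $k_0 \in [1,n-1]$ satisfying $s_0 + k_0 b \equiv c_0 \pmod{n}$. Now the predicate $[s_0 + kb \in S]$ is true at $k=0$ and false at $k=k_0$, so a standard binary search on the integer interval $[0,k_0]$ maintains the invariant $s_0 + \ell b \in S$, $s_0 + rb \notin S$ with $\ell < r$: at each step we query the midpoint $m$ via operation~(1) and retain whichever half preserves the invariant. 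After $O(\log n)$ queries the interval collapses to $(k,k+1)$, and we output $c = s_0 + (k+1)b$. Correctness is immediate---$c \notin S$ by the invariant, and $c = (s_0 + kb) + b \in S+b$---and the total running time is $O(\log n)$ for the inverse plus $O(\log n)$ constant-cost membership queries.

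The main obstacle is purely conceptual: the predicate $[s_0 + kb \in S]$ is not monotone on $\mathbb{Z}_n$, so a naive binary search on the range $[0,n-1]$ fails because we have no a priori ``false'' endpoint. The fix is to anchor the right endpoint at a known out-of-$S$ point obtained via operation~(3), which reduces the problem to locating a transition on an integer interval whose endpoints already have opposite predicate values---exactly the situation where textbook binary search applies. Once this anchoring trick is in place, no further subtlety is needed.
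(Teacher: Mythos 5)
Your proposal is correct and follows essentially the same approach as the paper: pick a known in-set point $s$ and out-of-set point $t$, convert to integer positions along the progression generated by $b$ (via $b^{-1}$), and binary search on that interval to locate a transition from $S$ to $S^c$. The only cosmetic difference is that you parameterize the search by offsets $k$ from $s_0$ while the paper works directly with $x=sb^{-1}$ and $y=tb^{-1}$; the underlying argument is identical.
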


\begin{proof}
Pick any element $s\in S$ and $t\not\in S$. Perform the binary search
as 
\begin{itemize}
\item Start with $x=sb^{-1}$ and $y=tb^{-1}$. If $x>y$, replace $y$
by $y+n$.
\item While $y>x+1$, calculate $mid=\lfloor\frac{x+y}{2}\rfloor$ and examine
$z=(mid)b$:
\begin{itemize}
\item If $z\in S$, write $x\leftarrow mid$.
\item If $z\not\in S$, write $y\leftarrow mid$.
\end{itemize}
\end{itemize}
Observe that in each step of the algorithm, it remains that $xb\in S$
and $yb\not\in S$. When the algorithm terminates, we found that $xb\in S$
but $(x+1)b\not\in S$, which gives $(x+1)b$ a desirable value for
$c$.
\end{proof}
We return to the current algorithm. After completing the first part
of the algorithm, we have $\frac{n}{c}$ APs that have total length
exceeding $n$. We optimise the above process, to instead enable adding
a whole AP at a time, in $O(\log n+k)$. The modifications to the
above binary search are substantial, in order to address all concerns. 
\begin{lem}
\label{lem:parttwo-algo}Let $S\subset\mathbb{Z}_{n}$ , $|S|\leq n-k$
and a given $AP(b,k)$. Then, we can find a set of values $T$ disjoint
from $S$, which has $|T|=k$, such that $S\cup T\subset S+AP(b,k)$
in $O(\log n+k)$ time. 
\end{lem}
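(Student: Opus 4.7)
The first move is to pass to $\phi$-coordinates via $\phi(x)=xb^{-1}$ (using that $n$ is prime and $b\neq 0$; if $b=0$ the claim is essentially vacuous). Writing $S'=\phi(S)$ gives $\phi(AP(b,k))=\{0,1,\ldots,k\}$, so the problem becomes: find $T'\subset\mathbb{Z}_{n}$ with $|T'|=k$, disjoint from $S'$, such that every $y\in T'$ admits some $s\in S'$ with $y\in\{s,s+1,\ldots,s+k\}$ (cyclically). Equivalently, $T'$ must consist of $k$ elements of $(S')^{c}$ that are within $k$ backward steps of $S'$. That at least $k$ such elements exist is exactly Cauchy--Davenport.

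I would organise the algorithm in two phases. \emph{Phase (a):} one modified binary search of the form in \propref{part-two-usual-binary-search} producing an initial boundary pair $(x,x+1)$ with $x\in S'$ and $x+1\notin S'$. Using the $O(1)$ primitives ``find any element of $S$'' and ``find any non-element of $S$'', this costs $O(\log n)$ and already supplies the first element of $T'$. \emph{Phase (b):} a forward walk from $x+1$, maintaining an anchor $s^{*}\in S'$ initialised to $x$. At each visited position $y$ one membership query decides the action: if $y\in S'$, update $s^{*}\leftarrow y$ (extending the coverage window to $[s^{*},s^{*}+k]$); otherwise record $y$ in $T'$. The walk halts once $|T'|=k$. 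So long as the invariant $y-s^{*}\leq k$ is maintained, every non-$S'$ position encountered is automatically legal to add to $T'$.

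Each walking step is $O(1)$, so if no long ``blob'' of consecutive $S'$-elements is ever entered, the total cost is plainly $O(k)$. The key obstacle is that when the walk enters a long blob of $S'$, stepping through one position at a time is wasteful: a single blob could be as large as $|S'|=\Theta(n)$. My fix is, whenever the walk detects it has landed in $S'$, to use an exponential-then-binary search that jumps in $O(\log L)$ queries to the next position outside $S'$, where $L$ is the blob length. After the jump, the walk resumes with $s^{*}$ set to the last $S'$-element before the jump.

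The real substance of the proof is the amortised analysis that the total cost of all blob-skips is $O(\log n)$, not $O(k\log n)$; this is the part I expect to require the most care. The argument has to split into two regimes. If the initial boundary lands on a run of $(S')^{c}$ of length $\geq k$, one binary search plus a walk of $k$ steps finishes everything. Otherwise several runs must be stitched together; since the total $T'$-contribution is $k$ and the runs supplying it have lengths summing to $k$, either the runs themselves are short (so walking cost stays $O(k)$ and one can argue that the few intervening blobs swept are also short using $|S'|\leq n-k$), or a constant number of long blobs are skipped, each charged its $O(\log n)$ cost against the $T'$-elements found on the far side. Charging each skip to the subsequent run and collapsing the resulting $\sum\log L_{i}$ with a Jensen-type bound against $|S'|\leq n-k$, together with the initial $O(\log n)$ binary search, should deliver the claimed $O(\log n+k)$ total.
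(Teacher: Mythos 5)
Your setup (reduce to $b=1$, search for a boundary in $O(\log n)$, then sweep) and the correctness argument (invariant $y-s^{*}\leq k$ is maintained because each non-$S'$ position you visit contributes to $T'$, so at most $k$ such steps occur in total) are both sound. The gap is in the amortized analysis of the blob-skipping, and it cannot be repaired in the form you describe: the cost of your algorithm is genuinely $\Theta(k\log(n/k))$ in the worst case, which is asymptotically worse than $O(\log n + k)$ for subconstant $k/n$.

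Concretely, take $n$ large, $k=\sqrt{n}$, and let $S'$ consist of $k$ blocks of $\Theta(n/k)$ consecutive residues each, separated by isolated non-$S'$ residues. Then $|S'|\approx n/2\leq n-k$, so the hypotheses are met. Your walk contributes one element to $T'$, then runs into a blob of length $\Theta(n/k)$ and must spend $\Theta(\log(n/k))$ on the exponential/binary search to escape it, then contributes one more element, runs into the next blob, and so on. The total is $\Theta(k\log(n/k))=\Theta(\sqrt{n}\log n)$, while the claimed bound is $O(\log n + \sqrt{n})$. Your Jensen bound gives exactly $\sum_i\log L_i\leq m\log((n-k)/m)\leq k\log(n/k)$, which is the right answer for your algorithm --- it just is not $O(\log n + k)$. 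Charging each skip to ``the subsequent run'' does not help because each run may contribute only one element to $T'$, so each of those charges is still $\Theta(\log(n/k))$.

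The fix the paper uses is qualitatively different: rather than a linear sweep punctuated by independent binary searches, it runs a recursive interval-splitting procedure (\texttt{fillgap}) that fills in missing elements and halves the interval simultaneously. Because the binary-search descent is interleaved with the insertions --- every return from the recursion adds one element to $S$, and the recursion depth is $\leq\log n$ --- the total number of calls is bounded by $\log n + k$ rather than paying $\log n$ per blob encountered. This sharing of the logarithmic depth across all $k$ insertions is exactly what your sequential-skip formulation loses, and it is the essential idea you would need to add.
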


\begin{proof}
Throughout our algorithm, we will maintain $S$ and $S^{c}$ as set-like
structure, so we can find a member in either set in $O(1)$.

Without loss of generality, we can assume that $b=1$ (multiply everything
below by $b^{-1}$). Note that we will only query $S$ for some $x$,
whether or not $x\in S$, and otherwise do not use additive/multiplicative
properties with $S$, so we may assume $b=1$.

Our goal is now to perform the following operation $k$ times: Choose
some $v$ such that $v\in S$, $v+1\not\in S$, and add $v+1$ to
$S$. Let the $k$ added values be $T$, then it easily follows that
$S\cup T\subset S+AP(1,k)$. We use the following fillgap procedure.This
procedure recursively inserts missing elements between $x$ and $y$
until $k$ elements have been added to $S$. Note that at the beginning
of each call to $\texttt{fillgap(x,y)}$, we have $x\in S$ and $y\not\in S$.

Due to its precise nature, we choose to present the algorithim entirely
in puesdocode (this is a mix of Kotlin/Swift/Python).

\begin{lstlisting}
fun fillgap(x,y){
	if(k values have already been added to S) return 
	if(y == x+1 ){
		add y into S
		return
	}

	let mid = midpoint(x,y)

	if(mid is not in S){
		fillgap(x,mid)
	}
	if(y-1 is not in S){
		fillgap(mid, y-1)
	}

	if(less than k values had been added){
		add y to S
	}
}

while(less than k values had been added to S){
	let x = any value in S 
	let y = any value not in S 
	fillgap(x,y)
}
\end{lstlisting}

Define directed length $\texttt{dist(x,y)}$ to be $y-x$ if $y\geq x$,
$y-x+n$ otherwise. $\texttt{midpoint(x,y)}$ is the mod $n$ variant
of the midpoint $\lfloor\frac{x+y}{2}\rfloor$. The directed interval
$[x,y]$ is split into $[x,mid]$ and $[mid,y]$, where $[x,mid]$
is either of the same size or $1$ more than size of $[mid,y]$. We
can compute this formally as follows, first let $y'=y$ if $y>x$,
and $y'=y+n$ if $y<x$. ($y\neq x$ is undefined and will not happen).
We then calculate $mid'=\lfloor\frac{x+y'}{2}\rfloor$, and lastly
let $mid=mid'$if $mid'<n$, and $mid'-n$ otherwise. 

Determining if $k$ values had already been added can be done in $O(1)$
in many ways and may depend on the realisation of $S$. 

We can check that the function is well defined. We only need to pay
attention to guaranteeing $x\neq y$. The base calls start with $x\neq y$.
If $y\geq x+3$ then clearly $\texttt{fillgap(x,mid)}$ and $\texttt{fillgap(mid,y-1)}$
have different arguments. If $y=x+2$, then after $\texttt{fillgap(x,mid)}$,
we must have $mid=y-1\in S$, so we will not recur into $\texttt{fillgap(mid,y-1)}$.

It is easy to see that the stated assumption holds for all recursive
calls, that unless $k$ values had already been added to $S$, at
the beginning of every (main or recursive) call fillgap(x,y), we must
have $x\in S,y\not\in S$. We can also check that at the end of every
fillgap(x,y), we must have $y\in S$. From this, the correctness that
every time we add $y\in S$ we must have $y-1\not\in S$ can be easily
verified. It is also clear that at most $k$ values will be added
in total.

As for the time complexity, note that in the fillgap function, unless
our objective is already completed, every return from the fillgap
function must add one value to $S$. Additionally, the maximum recursion
depth is at most $\log n$ since every additional depth must half
the length $y-x$. (By length, we mean the distance in cyclic order,
which is $y-x$ or $y-x+n$ whichever is in $[0,n-1]$.) Hence, by
the $\log n+k$-th call to the fillgap function we must have returned
at least $k$ times, showing that the whole procedure runs in $O(\log n+k)$.
\end{proof}
\begin{rem}
The runtime can be $\Omega(\log n+k)$ because fillgap may recurring
to $\log n$ depth before adding any values to $S$.
\end{rem}

\begin{cor}
\ref{lem:main-lemma} is solved in $O(n\log\log\log n)$
\end{cor}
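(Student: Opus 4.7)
The plan is a direct concatenation of the two phases built up above. First, I would invoke \corref{algo-sharp} on the input sumset $\sum_{i=1}^{p-1} AP(a_i,1)$ to obtain, in $O(n\log\log\log n)$ time, a transformed sumset of at most $\frac{n}{\log n}$ arithmetic progressions whose total length is at least $n$, together with the accumulated shift $g$ from operation~2 applications. A construction of a target $k$ for \lemref{main-lemma} is then obtained by producing a construction of $k-g$ in the transformed sumset and then reading off the corresponding original assignment via the constructive inclusions recorded during phase~1.

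Second, I would run the packing phase on the transformed sumset: initialize $S=\{0\}$ and, processing the APs $AP(b_j,k_j)$ of the transformed sumset in any order, invoke \lemref{parttwo-algo} on $S$ together with $AP(b_j,\min(k_j,\,n-|S|))$, replacing $S$ by $S\cup T$ at each step. Since each call enlarges $S$ by exactly the length fed in, the loop halts once $|S|=n$, which must happen because the total length available after phase~1 is at least $n$. At that point $\mathbb{Z}_n$ has been realized as a subset of the transformed sumset with an implicit construction attached to every element.

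The decisive point is the time accounting for phase~2. Each invocation of \lemref{parttwo-algo} costs $O(\log n+k_j')$ where $k_j'=\min(k_j,\,n-|S|)$ is the length actually consumed. Since at most $\frac{n}{\log n}$ APs are processed, the $\log n$ overhead sums to $O\!\left(\frac{n}{\log n}\cdot\log n\right)=O(n)$, while $\sum_j k_j'$ equals the number of elements ever inserted into $S$, which is at most $n$. Hence phase~2 runs in $O(n)$, and the total cost is $O(n\log\log\log n)+O(n)=O(n\log\log\log n)$.

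The only piece requiring care is the recovery of an explicit subset of the input $\{a_i\}$ summing to a given target, but this is precisely what the constructive-inclusion framework was designed to deliver: operation~1 reconstructs in $O(1)$, and by Remark~\ref{rem:op2-collisions} the aggregate operation~2 reconstruction cost across the whole transformation phase is already $O(n)$. No additional logarithmic factor sneaks in, and the overall runtime remains $O(n\log\log\log n)$.
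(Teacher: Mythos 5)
Your proposal is correct and follows essentially the same approach as the paper: invoke \corref{algo-sharp} to get $\frac{n}{\log n}$ APs with total length at least $n$ in $O(n\log\log\log n)$, then run the packing phase starting from $S=\{0\}$, with total cost $O(\frac{n}{\log n}\cdot\log n + n)=O(n)$, and finally note that reconstruction does not add to the complexity. The only cosmetic difference is that the paper pre-trims the AP lengths so they sum to exactly $n-1$ before packing, whereas you enforce the hypothesis $|S|\leq n-k$ of \lemref{parttwo-algo} on the fly by feeding $\min(k_j,n-|S|)$ into each call; these are equivalent.
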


\begin{proof}
Using \corref{algo-sharp}, it takes $O(n\log\log\log n)$ time to
reach a situation of at most $\frac{n}{\log n}$ APs. We reduce some
lengths of the APs, and make the sum of length to be $n-1$. Starting
from $S=\{0\}$, we add on the $\frac{n}{c}$ APs using \lemref{parttwo-algo}.
The runtime cost is $O(\frac{n}{c}\log n+\sum k)=O(\frac{n}{c}\log n+n)$,
which for $c=\log n$ this is $O(n)$. Altogether, the algorithm is
$O(n\log\log\log n)$. Recovering the construction from this stage
does not add to the time complexity, and details can be found in \ref{sec:Implementations}.
\end{proof}

\section{Practical $O(n\log\log n)$ version \label{sec:practical}}

The $O(n\log\log\log n)$ is impractical for many reasons, a huge
starting cost ($k\geq1000$) before the growth step becomes useful,
and the complicated reconstruction needed for enrichment step. We
include the simple $O(n\log\log n)$ option that uses only Trim step
and phase 2. The trim step can be implemented in a far simpler way
using \ref{prop:trim-variant-alternate}. An implementation of this
practical variant in kotlin can be found on GitHub via https://github.com/arvindf232/EGZ\_algorithm/tree/main.
The core algorithm (especially \ref{lem:main-lemma} only) can be
made very short but plenty of validations have been included. 
\begin{thm}
\ref{lem:main-lemma} can be solved in $O(n\log\log n)$. 
\end{thm}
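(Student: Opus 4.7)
The plan is to run the trim step once and then apply the packing phase, with careful bookkeeping. I set the target phase $k = \lceil \log n \rceil$ and invoke the trim step of \lemref{trim} up to that phase. By the lemma, using the $O(n \log k)$ time bound, this costs $O(n \log \log n)$. During the trim I monitor for the two early termination conditions: if some $A[i] \geq n-1$ (Special Case I) or $A[i] = C[i]$ holds for every $i$ (Special Case II), I finish the problem in $O(n)$ time using the direct constructions of \subsecref{Special-Cases}. So I may assume neither case triggers.

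When trim completes normally, invariant (5) together with properties (1) and (2) of \lemref{trim} give: no short collisions in the resulting sumset; $\sum_{A[i]<k} A[i] \leq n-1$; and consequently the number of long APs (those with $A[i] \geq k$) is at most $(n-1)/k = O(n/\log n)$. Since $W$ is non-decreasing, Cauchy--Davenport still guarantees that the trimmed sumset constructively covers $\mathbb{Z}_n$.

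I then start with $S = \{0\}$ and feed the APs one at a time into \lemref{parttwo-algo}, stopping once $|S| = n$. The long APs contribute $O((n/\log n) \cdot \log n + W_{\text{long}}) = O(n)$ to the packing cost directly from the lemma's $O(\log n + k)$ per-call bound.

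The main obstacle is controlling the contribution of short APs. A naive application of \lemref{parttwo-algo} pays an $O(\log n)$ binary-search overhead per call, and there can be up to $\Theta(n)$ short APs (each of length $1$ in the worst case), which would give $O(n \log n)$. The resolution uses the rigid structural fact that every short AP satisfies $A[i] = C[i]$: the first $A[i]$ multiples of $i$ are all marked and distinct, so the short APs' cells form a disjoint subfamily of $[1, n-1]$, a Special-Case-II-like configuration. Using this rigidity, the per-AP binary search can be replaced by a direct cell-based lookup (for each cell of $[1,n-1]$ not yet in $S$, one identifies in $O(1)$ the unique short AP and multiple that produces it), absorbing the short-AP contribution into $O(n)$ total. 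Summing, the algorithm runs in $O(n \log \log n)$ for the trim plus $O(n)$ for the packing, as claimed.
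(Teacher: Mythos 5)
Your plan is essentially the paper's proof: trim to phase $k\approx\log n$ in $O(n\log\log n)$, then exploit the trim's structural information so that short APs cost only $O(1)$ per marked cell in the packing phase, leaving $O(n)$ total for packing. The paper states the second part more cleanly: it seeds $S$ with the set of \emph{all} marked cells --- i.e.\ both the short APs' cells and the first $C[i]=k$ multiples of each long AP --- and then runs \lemref{parttwo-algo} only on the tails $AP(i, A[i]-C[i])$ of the at most $n/k$ long APs, giving $O((n/k)\log n + n) = O(n)$. Your ``cell-based lookup'' is the same idea but described less precisely (the cells you enumerate are the marked cells, not arbitrary cells of $[1,n-1]$), and it leaves implicit a point that matters for the element-count bookkeeping: those marked cells must be in $S$ \emph{before} any long-AP packing is run. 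Otherwise a long AP's packing call can occupy cells that a short AP was counting on, and the simple tally $1 + \sum_{\text{short}} A[i] + \sum_{\text{long}} k_i'$ no longer guarantees $|S|=n$. Seeding $S$ with all marked cells, as the paper does, sidesteps this ordering concern automatically. With that understanding, your argument is correct.
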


\begin{proof}
The important difference when using trim step only is that we cannot
guarantee increase $W$, so we cannot assume $W\geq2n$ and drop the
short APs as done in \ref{cor:algo-sharp}. We will need structural
information in the trim step. 

Start by running trim step till phase $k=\log n$, this costs $n\log\log n$.
At this point, the number of cells with $A[i]\geq k$ do not exceed
$\frac{n}{k}$. Instead of starting phase 2 from $\{0\}$, we directly
start from $S$ to be the set of all marked cells. We consider the
first $C[i]$ multiples of each $i$ already included. We include
each value $i$ for which $A[i]>C[i]$ , each of them in $O(\log n+(A[i]-C[i]))$
using \ref{lem:parttwo-algo}, this costs $O(n)$ total. 
\end{proof}

\section{Complete Algorithm \label{sec:complete-reduction}}

For completeness, we include the reduction from Erd\H{o}s-Ginzburg-Ziv
theorem to \lemref{main-lemma}, which is common in various proofs
of Erd\H{o}s-Ginzburg-Ziv theorem.
\begin{prop}
\label{prop:red-prime}The Erd\H{o}s-Ginzburg-Ziv theorem holds for
prime $n$. A solution can be constructed using one call to lemma
2 and $O(n)$ additional time.
\end{prop}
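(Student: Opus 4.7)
The plan is to carry out the classical sorting reduction. First, I would view each $a_i$ as an integer in $[0,n-1]$ and sort so that $a_1 \leq a_2 \leq \cdots \leq a_{2n-1}$. Because every entry lies in $\mathbb{Z}_n$, counting sort (or radix sort) accomplishes this in $O(n)$ time rather than $O(n\log n)$. This sort is the only nontrivial step besides the single call to \lemref{main-lemma}.

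Next I would check whether $a_i = a_{i+n-1}$ for some $i \in [1,n]$. If so, sortedness forces $a_i = a_{i+1} = \cdots = a_{i+n-1}$, giving $n$ equal elements whose sum is $n a_i \equiv 0 \pmod n$, and we are done without invoking the lemma at all. Otherwise $a_{i+n-1} > a_i$ for every $i \in [1,n]$, so the differences $b_i := a_{i+n-1} - a_i$ are nonzero in $\mathbb{Z}_n$ for all such $i$.

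Set $S = \sum_{i=1}^{n} a_i$ and apply \lemref{main-lemma} to the $p-1 = n-1$ nonzero elements $b_1,\ldots,b_{n-1}$ with target value $-S \bmod n$, obtaining a subset $J \subseteq [1,n-1]$ with $\sum_{j\in J} b_j \equiv -S \pmod n$. Then output
\[
I = \{\, i : 1 \leq i \leq n,\ i \notin J\,\} \cup \{\, j + n - 1 : j \in J\,\},
\]
which represents swapping $a_j$ for $a_{j+n-1}$ for each $j \in J$. This $I$ has exactly $n$ elements, and its sum is $S + \sum_{j \in J} b_j \equiv 0 \pmod n$, as required. All index bookkeeping (recovering the original indices after sorting, and assembling $I$) is linear.

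There is no real obstacle here; the only points that need mild care are (a) applying \lemref{main-lemma} to exactly $n-1$ of the differences, matching its signature of $p-1$ nonzero inputs, rather than all $n$ of them, and (b) ensuring that the sorting step is genuinely $O(n)$ by using counting/radix sort instead of a comparison sort, so that the total cost of the reduction is $O(n)$ on top of the single call to \lemref{main-lemma}.
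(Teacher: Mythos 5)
Your proof follows the same reduction as the paper (sort, form pairwise differences, invoke \lemref{main-lemma} with target $-\sum a_i$, then swap), and your observation that counting/radix sort keeps the sort within $O(n)$ matches the paper's implementation section. However, there is an off-by-one error in the pairing that makes the construction fail.

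You take $b_i := a_{i+n-1} - a_i$ and then output
$I = \{\, i : 1 \le i \le n,\ i \notin J\,\} \cup \{\, j+n-1 : j \in J\,\}$.
The first set always contains $n$ (since $J \subseteq [1,n-1]$), and if $1 \in J$ then the second set also contains $1+n-1 = n$. In that case $|I| = n-1$, not $n$; concretely, with $n=3$ and $J=\{1\}$ you get $I = \{2,3\}\cup\{3\} = \{2,3\}$. Nothing in \lemref{main-lemma} lets you exclude $1$ from $J$, so the claim ``This $I$ has exactly $n$ elements'' does not hold. The fix is to pair with offset $n$ rather than $n-1$: define $b_i := a_{i+n} - a_i$ for $i \in [1,n-1]$ (guarding against $a_i = a_{i+n}$, which gives $n+1$ equal entries and hence $n$ of them directly), and set $I = ([n]\setminus J) \cup \{\, j+n : j \in J\,\}$. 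Then the swapped indices live in $[n+1,2n-1]$, disjoint from $[1,n]$, so $|I|=n$ is automatic. This is exactly the paper's choice of offsets; with that correction your argument and complexity accounting are fine.
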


\begin{proof}
Given $2n-1$ values $a_{i}$, we sort them in non-decreasing order,
obtaining $a_{1}\leq a_{2}\leq\cdots\leq a_{2n-1}$. If $a_{i}=a_{i+n}$
for some $i\in[1,n-1]$, then $a_{i}=a_{i+1}=\cdots=a_{i+n}$, and
the theorem follows immediately by choosing $n$ copies of this value.
Otherwise, let $b_{i}=a_{i+n}-a_{i}$ for $i\in[n-1]$, then each
$b_{i}$ is nonzero. We can now apply \lemref{main-lemma}, and find
a subset of $\{b_{1},b_{2}\cdots b_{n-1}\}$ that sums to $-(\sum_{i=1}^{n}a_{i})$
in $O(n\log\log\log n)$. Suppose that $\sum_{i\in I}b_{i}=-(\sum_{i=1}^{n}a_{i})$.
Then, it follows that $\sum_{i\in[n]\backslash I}a_{i}+\sum_{i\in I}a_{i+n}=0$,
which gives the desired $n$ elements that sum to a multiple of $n$. 
\end{proof}
\begin{prop}
\label{prop:red-general}The Erd\H{o}s-Ginzburg-Ziv theorem holds
for all $n$. A solution can be constructed using calls to the previous
proposition and $O(n)$ additional time for a total of $O(n\log\log\log n)$
time.
\end{prop}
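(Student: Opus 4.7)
The plan is the classical reduction by induction on $n$, using \propref{red-prime} as the only nontrivial subroutine. If $n$ is prime, \propref{red-prime} handles it directly; otherwise I would write $n = pm$ with $p$ a prime divisor of $n$ and $m = n/p < n$, and reduce the instance of size $n$ to an instance of size $m$.

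For the reduction, I would repeatedly extract blocks of size $p$ from the $2n-1$ inputs. Since $2n-1 \geq 2p-1$, \propref{red-prime} produces a $p$-element subset summing to $0 \bmod p$; remove it and repeat on the remainder. After extracting $k$ such blocks, $2n-1-kp$ elements remain, which is at least $2p-1$ exactly when $k \leq 2m-2$. Thus I obtain $2m-1$ disjoint blocks $G_1, \dots, G_{2m-1}$ with $\sum_{a \in G_i} a = p\, s_i$ for integers $s_i$. I would then recurse on the length $2m-1$ sequence $(s_1 \bmod m, \dots, s_{2m-1} \bmod m) \in \mathbb{Z}_m$ to obtain $I \subset \{1,\dots,2m-1\}$ of size $m$ with $\sum_{i \in I} s_i \equiv 0 \pmod m$. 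The union $\bigcup_{i \in I} G_i$ then consists of $pm = n$ of the original elements and has sum $p\sum_{i \in I} s_i$, which is a multiple of $pm = n$, as required.

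The main step requiring care is the time analysis. Let $n = p_1 p_2 \cdots p_r$ be the prime factorization, where at level $j$ I choose $p_j$ to be the smallest remaining prime factor, and set $n_j = p_j p_{j+1} \cdots p_r$, so that $n_{j+1} = n_j/p_j \leq n_j/2$ and hence $\sum_j n_j \leq 2n$. Level $j$ makes $2n_{j+1} - 1$ calls to \propref{red-prime} at prime $p_j$, each costing $O(p_j \log\log\log p_j) = O(p_j \log\log\log n)$, contributing $O(n_j \log\log\log n)$ in total; the bookkeeping for packing inputs into blocks and later unpacking them at this level is plainly $O(n_j)$. Summing over all levels yields $O(n \log\log\log n)$ for the calls plus $O(n)$ for the bookkeeping, matching the claim. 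The only potential obstacle is confirming that the per-level overhead really is linear in $n_j$ (rather than something like $n_j \log n_j$), which is immediate since at each level each input belongs to exactly one block and is touched a constant number of times during the extraction and recombination.
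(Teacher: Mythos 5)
Your proof is correct and follows essentially the same route as the paper: factor out a prime $p$, repeatedly extract $2m-1$ blocks of size $p$ via \propref{red-prime}, recurse on the quotients modulo $m=n/p$, and bound the total cost by unrolling the recursion and using the geometric decay $n_{j+1}\le n_j/2$ so that $\sum_j n_j=O(n)$. The paper phrases the complexity as the recurrence $T(n)=(2a-1)T(p)+T(a)+O(n)$ and sums it, which is the same estimate you obtain by explicit levelwise accounting.
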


\begin{proof}
Proceed by induction. $n=1$ is trivial. If $n$ is prime the previous
proposition applies. Otherwise, let $n=pa$ where $p$ is a prime
and $a\neq1$. Given $2n-1$ values in the multiset $A$ and an initially
empty collection $B$, we proceed as follows. As long as there are
at least $2p-1$ values in $A$, we perform the following: obtain
a subset of $p$ values that sum to some $X$ that is a multiple of
$p$. We remove these $p$ values from $A$ and add $\frac{X}{p}$
to $B$, using the prime case with $n=p$. We can perform this at
least $2a-1$ times since after $2a-2$ operations, we have $2n-1-(p)(2a-2)=2p-1$
values remaining. We now invoke induction hypothesis for $n=a$ on
$B$, which implies that some $a$ values of $B$ sum to a multiple
of $a$. This corresponds to sum of exactly $pa$ values, and its
sum is a multiple of $pa$.

For the complexity, we see that the time estimate $T(n)$ follows
$T(n)=(2a-1)T(p)+T(a)+O(n)$. Note that $a\leq\frac{n}{2}$, so the
contribution from the $O(n)$ term in the recurrence is still overall
$O(n)$ (by geometric summation), similarly, since $T(p)$ is $O(p\log\log\log p)$
by \ref{prop:red-prime}, the contribution from this term is also
$O(n\log\log\log n)$ -- a $T(p)$ $k$ layers deal will be bounded
by $2^{-k}p\log\log\log p$, and summing this over a prime decomposition
of $n$ still gives $O(n\log\log\log n)$. We see that $T(n)$ is
$O(n\log\log\log n)$.
\end{proof}
Note that this reduction is able to ``support'' converting linear
algorithms of \lemref{main-lemma} into a linear algorithm of Erd\H{o}s-Ginzburg-Ziv
theorem. Also note that if we decide to use $O(n\log\log n)$ variant
(\subsecref{core-algo} for phase 1), we will similarly end up with
$O(n\log\log n)$ overall.

\section{Implementations \label{sec:Implementations}}

We believe that for practical values, the $O(n\log\log n)$ algorithm
will be substantially faster than the $O(n\log\log\log n)$ one. It
is also simpler to implement. Therefore, we shall only discuss implementations
of the $O(n\log\log n)$ algorithm. An implementation of this algorithm
in Kotlin can be found in (..some GitHub link...)

We discuss a few implementation choices that simplify the code and
help maintain the desired time complexity.

\subsection{Special Termination of trim and enrichment step\label{subsec:initial-n-1}}

If $W=n-1$, the algorithm might terminate without encountering any
collisions -- thus failing to achieve enrichment targets for an enrichment
step. If the algorithm proceeds without any further marking operation,
this must be detected and especially handled as mentioned in \subsecref{Special-Cases}.

Similarly, $A[i]\geq n-1$ must be handled or there can be overflows.
It suffices to check if any $A[i]\geq n-1$ whenever they are modified.

\subsection{Modular Inverse}

All modular inverses modulo some prime can be computed in total $O(n)$
time. The inverses should be precomputed before the main algorithm
to \lemref{main-lemma}. A intuitive way to do this would be through
finding any primitive root and storing the fact that $(a^{k})^{-1}=(a^{p-1-k})$.
This could also be done with the folklore relation: 

\[
inv[i]=(-\lfloor\frac{n}{i}\rfloor\times inv[n\%i])(\mod n)
\]

which is an rearrangement of the equation 

\[
n=i\times\lfloor\frac{n}{i}\rfloor+(n\%i)
\]

\subsection{Maintaining Sets}

In the proof of invariant (1) in \ref{sec:two-main}, we have described
how to maintain the targets necessary for the trim and enrichment
step. 

We will now discuss the maintaining candidates in packing phase. We
see from \ref{lem:parttwo-algo}, that we need to maintain the set
$S$ and $S^{c}$. It is possible to use a general set implementation
to support their maintenance in $O(n)$. For example, a set based
on doubly-linked list can support membership queries, insertions,
deletions, retrieval of an arbitrary element in $O(1)$ time, and
iterating over it in $O(size)$. This would be sufficient for our
purposes. We could use simpler structures by analyzing that not all
four operations are used, and maintain them differently. The add,
removal and query for containment of $S$ and $S^{c}$ can simply
be handled by a boolean array of size $n$. We also need additional
operations of finding any element of $S$ and $S^{c}$. For $S$,
in our particular variant of the algorithm, $0$ is always an element
of $S$ so we may just report $0$. For $S^{c}$, since elements are
only removed from, we could keep a pointer $pt$ initially having
$pt=0$, and in each find operation, increment $pt$ until $pt\in S^{c}$
and then report $pt$. This works in $O(n)$ amortized. 

\subsection{Sorting}

The sorting required in \propref{red-prime} can be performed in $O(n)$
time using counting sort or radix sort (under our current need to
sort values of at most $n$), rather than the usual $O(n\log n)$
time of comparison-based sorts. 

\subsection{Recovery of Constructions \label{subsec:Recovery-of-Constructions}}

To support the recovery of answers after the procedure of operation
1 in transformation phase, whenever a type-1 operation is performed,
its information is stored. Once a solution is obtained after packing
phase, we can process the operations in reverse, modifying the solution
accordingly, eventually giving a solution satisfying the original
constraints.

To support the recovery of answers in the procedure of packing phase,
we keep track of an additional array $C[i]$, and we set $C[i]=b$
when the value $i$ is added to $A$ during adding of $S(b,k)$. During
recoveries, if we wish to obtain a target $t$, we can repeatedly
add $C[t]$ to our solution and assign $t\leftarrow t-C[t]$.

Due to its theoretical nature , we briefly mention operation $2$.
Recovering answers from the procedure of part I and operation $2$
would require extended Euclidean algorithm to solve for \propref{operation-2},
but we have already seen that this does not add to the time complexity
by \remref{op2-collisions}.

\section{Concluding Remarks}

We discuss a few directions the algorithms can potentially be improved
on.

We have not improved upon the reduction of the theorem to \lemref{main-lemma}.
It may seem that the theorem almost completely reduces to \lemref{main-lemma},
but this is not so. If the values are evenly distributed between $[0,n-1]$,
then we have more freedom about pairing up values in \propref{red-prime}.
Consider the opposite case where most values are equal; for instance,
suppose that in a sequence, $n-1$ values are equal to some $v$.
Let $B$ be the remaining values, and write $C=B-v$. Then, any solution
with exactly $n$ values is equivalent to finding a non-empty subset
of $C$ that sums to $0$. This differs from \lemref{main-lemma}
significantly in that we only want a target of $0$, instead of an
arbitrary target. This can actually be solved in $O(n)$: Consider
the $n+1$ different prefix sums and take the two prefix sums that
have the same value. Therefore, it is conceivable that a better reduction
is possible.

In the time analysis for \lemref{parttwo-algo}, the packing phase
are bounded by $O(n)$ and the transformation phase is bounded by
$O(n\log\log\log n)$. As far as the author is aware, adjusting the
constant choices to balance the two sides may lead to constant factor
improvement, but does not seem to provide anything asymptotically
better than $O(n\log\log\log n)$ overall. 

Our solution did not make use of any involved facts about the cyclic
group $\mathbb{Z}_{p}$. It may happen that the above algorithm actually
runs faster than $O(n\log\log\log n)$ in the worst case if, say,
intersections of APs are carefully considered.

We conjecture that the problem can indeed be solved in $O(n)$; however,
we believe that any approach that requires unmarking already visited
cells is unlikely to achieve this ideal complexity.

\end{document}